\newcommand{\cat}{\mathcal{C}}
\newcommand{\Cat}{\mathrm{C}}
\newcommand{\Z}{\mathbb{Z}}
\newcommand{\K}{\mathbb{K}}
\newcommand{\N}{\mathbb{N}}
\newcommand{\Ob}{\operatorname{Ob}}
\newcommand{\Ker}{\operatorname{Ker}}
\newcommand{\Supp}{\operatorname{Supp}}
\newcommand{\GL}{\operatorname{GL}}
\newcommand{\Vect}{\mathcal{V}}
\newtheorem{theorem}{Theorem}[section]
\newtheorem{proposition}[theorem]{Proposition}
\newtheorem{lemma}[theorem]{Lemma}
\theoremstyle{definition}
\newtheorem{definition}{Definition}[section]
\theoremstyle{remark}
\newtheorem{remark}{Remark}[section]
\begin{document}
\title{Perfect category-graded algebras}
\author{Ana Paula Santana and Ivan Yudin
\thanks{Financial support by CMUC/FCT gratefully acknowledged by both
authors.}
\thanks{The second author's work is supported by the FCT Grant
SFRH/BPD/31788/2006.}}

\maketitle

\begin{abstract}
In a perfect category every object has a minimal projective resolution. We give
a criterion for the category of modules over a category-graded
algebra to be perfect.
\end{abstract}

\section{Introduction}
In \cite{semi-perfect} the second author explored
homological properties of algebras graded over a small category. Our interest in
these algebras arose from our research on the homological properties of Schur algebras, but we
believe that they play an important organizational role in representation
theory in general. 

Recall that an abelian category $\cat$ is called \emph{perfect} if every object
of $\cat$ has a \emph{projective cover} (see Section~\ref{preliminaries}). The
existence of projective covers for every object guarantees the existence of
minimal projective resolutions for every object in the category. 
The category $\cat$ is called \emph{semi-perfect} if every finitely generated
object has a projective cover. We say that a category-graded algebra $A$ is
(semi)-perfect if the category of $A$-modules is (semi)-perfect. 
In~\cite{semi-perfect} it was given a criterion for  category-graded algebras
to be semi-perfect. This criterion is sufficient to ensure that all
category-graded algebras which appear in~\cite{main} are semi-perfect. But this
is not enough to prove the existence of a minimal  projective
resolution for some of them, as the kernel of a projective cover may
not be finitely generated. In this article we fill this gap by giving a criterion
for a category-graded algebra to be perfect and extend the results
of~\cite{semi-perfect} to algebras over an arbitrary commutative ring with
identity.

Next we introduce the notions related with category-graded algebras that will
be needed and explain the main result in
more detail. 
Let $R$ be a commutative ring with identity. We will write $\otimes$ for the tensor product of
two $R$-modules over $R$.  Given a small category $\Cat$, 
a \emph{$\Cat$-graded $R$-algebra} (see~\cite{semi-perfect})  is a
collection of $R$-modules $A_{\alpha}$ parametrised by the arrows
$\alpha$ of $\Cat$, with preferred elements $e_s\in A_{1_s}$ for every
object $s$ of $\Cat$ and a collection of $R$-module homomorphisms $\mu_{\alpha,\beta}\colon
A_{\alpha}\otimes A_{\beta}\to A_{\alpha\beta}$ for every composable
pair of morphisms $\alpha$, $\beta$ of $\Cat$. For $a\in A_{\alpha}$ and
$b\in A_{\beta}$ we shall write $ab$ for $\mu_{\alpha,\beta}(a\otimes b)$.  For every composable
triple $\alpha$, $\beta$, and $\gamma$ of arrows in $\Cat$ and $a\in
A_{\alpha}$, $b\in A_{\beta}$, and $c\in A_{\gamma}$ 
we require  associativity
$$
a(bc) = (ab)c.
$$
Suppose also that $\alpha\colon s\to t$. Then we require
$$
e_ta = a = ae_s,\  \mbox{    for any $\alpha\in A_\alpha$}. 
$$

A \emph{$\Cat$-graded module} $M$ over a $\Cat$-graded $R$-algebra $A$ is a
collection of $R$-modules 
$M_{\gamma}$ parametrised by the arrows $\gamma$ of $\Cat$ with 
$R$-module homomorphisms
$r_{\alpha,\beta}\colon A_{\alpha}\otimes M_{\beta}\to
M_{\alpha\beta}$ for every composable pair of morphisms in $\Cat$.
We shall write $am$ instead of $r_{\alpha,\beta}(a\otimes m)$ for $a\in
A_{\alpha}$ and $m\in M_{\beta}$.

As always we will assume the usual module axioms:
\begin{align*}
	a(bm) = (ab)m & \mbox{ for $a\in A_{\alpha}$, $b\in A_{\beta}$, $m\in
	M_{\gamma}$},
\end{align*}
	where $\alpha$, $\beta$, and $\gamma$ are composable; and
	\begin{align*}
		e_t m = m,
\end{align*}
where $\gamma\colon s\to t$ and $m\in M_{\gamma}$. 

An \emph{$A$-homomorphism} between two $\Cat$-graded $A$-modules $M$ and
$N$ is a collection of $R$-module homomorphisms $f_{\gamma}\colon M_{\gamma}\to
N_{\gamma}$ such that for every composable pair of morphisms
$\alpha$, $\beta\in \Cat$ 
$$
f_{\alpha\beta}(am) = a f_{\beta}(m).
$$

We denote the category of all $\Cat$-graded
$A$-modules by $A$-mod.

For morphisms $\beta\colon s\to u$ and $\gamma\colon s\to t$ in $\Cat$ define
$$
A\left( \gamma:\beta \right) = \bigoplus_{\alpha\colon \alpha\beta=\gamma}
A_{\alpha}. 
$$
Note that $A\left( \gamma:\gamma \right)$ is a ring with  unit
$e_{t}$  and the multiplication induced by the maps
$\mu_{\alpha,\beta}$.

The main result of this paper is 

\begin{theorem}
	\label{main0}
	Let $\Cat$ be a small category and  $A$  a $\Cat$-graded
	$R$-algebra. Suppose that  every sequence  $\left( \alpha_k \right)_{k\in
	\N}$ of arrows in $\Cat$ satisfying for all $k, l\ge 1$  $$A\left( \alpha_k:\alpha_{k+l}
	\right)\not\cong 0$$  has at least one element repeated	infinitely many times.  Then $A$-mod is a
	perfect category if and only if the rings $A\left( \gamma:\gamma
	\right)$ are left perfect for all maps $\gamma$ in $\Cat$. 
\end{theorem}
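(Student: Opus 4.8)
The plan is to work throughout with the \emph{standard projective modules}. For an arrow $\gamma$ of $\Cat$ with target $t$, let $P^{\gamma}$ be the $\Cat$-graded $A$-module with $P^{\gamma}_{\delta}=A(\delta:\gamma)$ and action induced by the maps $\mu_{\alpha,\beta}$. A Yoneda-type computation gives a natural isomorphism $\operatorname{Hom}_{A}(P^{\gamma},M)\cong M_{\gamma}$, $f\mapsto f_{\gamma}(e_{t})$; hence each $P^{\gamma}$ is projective, it is generated by $e_{t}\in P^{\gamma}_{\gamma}$, and $\{P^{\gamma}\}$ is a set of projective generators, so every projective $A$-module is a direct summand of a direct sum of copies of the $P^{\gamma}$. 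Moreover $\operatorname{End}_{A}(P^{\gamma})\cong A(\gamma:\gamma)^{\mathrm{op}}$, and since $A(\delta:\gamma)=0$ unless $\delta$ and $\gamma$ have the same source, $A$-mod splits as the product over the objects of $\Cat$ of its full subcategories of modules concentrated on the arrows out of one object. An orthogonal idempotent decomposition of $e_{t}$ in $A(\gamma:\gamma)$ splits $P^{\gamma}$ accordingly; this yields the indecomposable projectives once $A(\gamma:\gamma)$ is known to be semiperfect. None of this uses the hypothesis on $\Cat$. Finally let $\operatorname{rad}A$ be the graded radical of $A$: a graded two-sided ideal whose diagonal components are the Jacobson radicals $J(A(\gamma:\gamma))$ and whose component $(\operatorname{rad}A)(\delta:\gamma)$ is all of $A(\delta:\gamma)$ whenever $P^{\delta}\not\cong P^{\gamma}$; for $M\in A$-mod write $\operatorname{rad}M=(\operatorname{rad}A)M$, the intersection of the maximal submodules of $M$.

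The technical heart is the following Nakayama-type statement: \emph{under the hypothesis of the theorem, if every $A(\gamma:\gamma)$ is left perfect then $\operatorname{rad}M$ is superfluous in $M$ for every $A$-module $M$}; equivalently, $M=\operatorname{rad}M$ forces $M=0$. I would prove this by the classical tree argument for left $T$-nilpotency, adapted to the grading. If $M=\operatorname{rad}M$ and $0\neq m\in M_{\gamma_{1}}$, write $m$ as a finite sum of terms (element of $\operatorname{rad}A$)$\cdot$(element of $M$); some term is nonzero, its right factor lies in some $M_{\gamma_{2}}$ and its left factor $c_{1}$ in $(\operatorname{rad}A)(\gamma_{1}:\gamma_{2})$; iterating (each right factor again lies in $\operatorname{rad}M$) one extracts a branch of the resulting tree, namely arrows $\gamma_{1},\gamma_{2},\dots$ and elements $c_{k}\in(\operatorname{rad}A)(\gamma_{k}:\gamma_{k+1})$ with $c_{1}c_{2}\cdots c_{n}\neq0$ for every $n$. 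Then $A(\gamma_{k}:\gamma_{k+l})\neq0$ for all $k,l\geq1$, so by hypothesis some arrow $\gamma$ occurs among the $\gamma_{k}$ infinitely often; collecting the $c_{k}$ lying between consecutive occurrences of $\gamma$ gives $d_{1},d_{2},\dots\in J(A(\gamma:\gamma))$ (they lie in $\operatorname{rad}A$ and in $A(\gamma:\gamma)$) with $d_{1}\cdots d_{N}\neq0$ for all $N$, contradicting left $T$-nilpotency of $J(A(\gamma:\gamma))$, which holds since $A(\gamma:\gamma)$ is left perfect (Bass). The same circle of ideas, combined with $A(\gamma:\gamma)$ being semiperfect, shows that every simple $A$-module $S$ has a local projective cover $P^{\gamma}e\twoheadrightarrow S$, hence that every semisimple object of $A$-mod has a projective cover. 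This is the step I expect to be the real obstacle: it is precisely here that the combinatorial hypothesis on $\Cat$ must be married to the left $T$-nilpotency of the $J(A(\gamma:\gamma))$, and arranging that the tree/branch construction lands genuinely inside a single ring $A(\gamma:\gamma)$ is the delicate point.

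Granting the Nakayama statement, the \emph{if} direction is routine. Given $M\in A$-mod, its top $M/\operatorname{rad}M$ is semisimple, hence has a projective cover $\pi\colon P\twoheadrightarrow M/\operatorname{rad}M$ with $P$ a direct sum of indecomposable standard projectives; lift $\pi$ through $M\twoheadrightarrow M/\operatorname{rad}M$ to $\widetilde{\pi}\colon P\to M$. Since $\im\widetilde{\pi}+\operatorname{rad}M=M$ and $\operatorname{rad}M$ is superfluous, $\widetilde{\pi}$ is surjective; its kernel is contained in $\Ker\pi=\operatorname{rad}P$, which is superfluous in $P$ by the Nakayama statement applied to $P$. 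Therefore $\widetilde{\pi}$ is a projective cover of $M$, and $A$-mod is perfect.

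For the \emph{only if} direction (which does not need the hypothesis on $\Cat$) assume $A$-mod is perfect, fix an arrow $\gamma$, and put $S=A(\gamma:\gamma)$. Since $P^{\gamma}$ is a cyclic projective in a perfect category, $S\cong\operatorname{End}_{A}(P^{\gamma})^{\mathrm{op}}$ is semiperfect, and a perfect abelian category has the Nakayama property ($\operatorname{rad}N$ superfluous in $N$ for all $N$). For a left $S$-module $N$ form the $A$-module $L_{\gamma}N$ with $(L_{\gamma}N)_{\delta}=A(\delta:\gamma)\otimes_{S}N$: it is the value of the left adjoint of the exact functor $\operatorname{ev}_{\gamma}\colon A\text{-mod}\to S\text{-mod}$, $M\mapsto M_{\gamma}$, and $\operatorname{ev}_{\gamma}L_{\gamma}\cong\mathrm{id}$. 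Because $(\operatorname{rad}A)(\delta:\gamma)=A(\delta:\gamma)$ whenever $P^{\delta}\not\cong P^{\gamma}$, the top of $L_{\gamma}N$ is supported at $\gamma$ (up to the equivalence identifying $P^{\delta}$ with $P^{\gamma}$); hence a projective cover $p\colon Q\twoheadrightarrow L_{\gamma}N$ can be taken with $Q$ a direct sum of copies of indecomposable summands of $P^{\gamma}$. Then $Q$ is generated in degree $\gamma$, $\operatorname{ev}_{\gamma}(Q)$ is a projective $S$-module, and $\operatorname{ev}_{\gamma}(p)\colon\operatorname{ev}_{\gamma}(Q)\to N$ is onto with kernel $(\Ker p)_{\gamma}$; this kernel is superfluous, since any $S$-submodule supplementing it in $\operatorname{ev}_{\gamma}(Q)$ generates an $A$-submodule supplementing $\Ker p$ in $Q$, hence equal to $Q$, hence of full degree-$\gamma$ part (all round trips $\gamma\to\cdots\to\gamma$ lie in $S$ and so preserve an $S$-submodule). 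So $\operatorname{ev}_{\gamma}(p)$ is a projective cover of $N$: every left $S$-module has a projective cover, i.e. $A(\gamma:\gamma)$ is left perfect. Apart from the Nakayama statement, everything above is a transcription of the classical proof of Bass's theorem into the $\Cat$-graded setting.
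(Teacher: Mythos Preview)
Your approach is essentially correct and is genuinely different from the paper's. The paper does not build projective covers by hand; instead it invokes Harada's criterion for perfectness of a Grothendieck category: a generating set of completely indecomposable projectives being $T$-nilpotent (in Harada's sense) forces the category to be perfect. The paper checks that $\{A[\gamma]\}$ (your $P^\gamma$) is such a $T$-nilpotent system, using exactly the same combinatorial core you isolate---find a repeated arrow $\gamma$ along the sequence, collect the intermediate maps into elements of $J(A(\gamma{:}\gamma))$, and kill a long product by left $T$-nilpotency. For the converse the paper again leans on Harada: in a perfect Grothendieck category the finitely generated projective generators form a $T$-nilpotent system, and this yields right $T$-nilpotency of $J\bigl(A(\gamma{:}\gamma)^{\mathrm{op}}\bigr)$ directly. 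Your route is more self-contained (no appeal to \cite{harada}) and makes the Bass/Nakayama mechanism explicit, at the cost of having to set up $\operatorname{rad}A$, verify that $(\operatorname{rad}A)M$ really is the radical of $M$, and run the lifting argument; the paper's route is shorter but outsources the hard categorical work. One caution: your description of $\operatorname{rad}A$ is incomplete when $P^{\delta}\cong P^{\gamma}$ for $\delta\neq\gamma$; what you need is the categorical (Kelly) radical of the full subcategory on the $P^{\gamma}$, which is automatically a two-sided ideal with $(\operatorname{rad}A)(\gamma{:}\gamma)=J(A(\gamma{:}\gamma))$---this is exactly what makes your claim ``$d_k\in J(A(\gamma{:}\gamma))$'' legitimate. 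Your functorial ``only if'' via $L_{\gamma}\dashv\operatorname{ev}_{\gamma}$ is a nice alternative to the paper's $T$-nilpotency argument and, like the paper's, does not use the combinatorial hypothesis on $\Cat$.
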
 
Let us write $\gamma\succ\beta$ if $\beta$ is a right divisor of $\gamma$. Then
as a corollary we get
\begin{theorem}
	\label{main1}
	Let $\Cat$ be a small category such that every sequence $\beta_1\succ
	\beta_2\succ\dots$ of morphisms in $\Cat$ has at least one element
	repeated
	infinitely many times. 
	 Then for a $\Cat$-graded $R$-algebra $A$ the category $A$-mod is 
	perfect if and only if the rings $A\left( \gamma:\gamma \right)$ are
	left perfect for all arrows $\gamma\in \Cat$.
\end{theorem}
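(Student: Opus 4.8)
The plan is to obtain Theorem~\ref{main1} as an immediate consequence of Theorem~\ref{main0}: I will show that the descending-chain hypothesis imposed on $\Cat$ in Theorem~\ref{main1} is stronger than the hypothesis on sequences of arrows required by Theorem~\ref{main0}. Since the two conclusions are literally the same statement, this reduction is all that is needed.

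First I would unwind the definition of $A(\gamma:\beta)$. By construction $A(\gamma:\beta)=\bigoplus_{\alpha\colon\alpha\beta=\gamma}A_\alpha$; in particular, if there is no arrow $\alpha$ with $\alpha\beta=\gamma$, then $A(\gamma:\beta)=0$. Contrapositively, $A(\gamma:\beta)\not\cong 0$ forces the existence of some $\alpha$ with $\alpha\beta=\gamma$, i.e.\ $\beta$ is a right divisor of $\gamma$, which is exactly $\gamma\succ\beta$. Note that for this implication one does not need any $A_\alpha$ to be nonzero, only that the index set be nonempty.

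Next, let $(\alpha_k)_{k\in\N}$ be an arbitrary sequence of arrows in $\Cat$ satisfying $A(\alpha_k:\alpha_{k+l})\not\cong 0$ for all $k,l\ge 1$. Applying the observation above with $l=1$ gives $\alpha_k\succ\alpha_{k+1}$ for every $k$, so $\alpha_1\succ\alpha_2\succ\alpha_3\succ\cdots$ is a descending chain of morphisms in $\Cat$. By the hypothesis of Theorem~\ref{main1}, such a chain has at least one element repeated infinitely many times; hence so does $(\alpha_k)$. As $(\alpha_k)$ was arbitrary, the hypothesis of Theorem~\ref{main0} is satisfied, and Theorem~\ref{main0} now delivers the desired equivalence: $A$-mod is perfect if and only if $A(\gamma:\gamma)$ is left perfect for every arrow $\gamma$ of $\Cat$.

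I expect no real obstacle here; the argument is purely a comparison of hypotheses. The one point worth stating carefully is the direction of the implication — it is the combinatorial condition on $\Cat$ that is the stronger one, because every sequence meeting the condition $A(\alpha_k:\alpha_{k+l})\not\cong 0$ is in particular a $\succ$-descending chain, whereas a $\succ$-chain need not satisfy that condition (the relevant $A_\alpha$ might all vanish). One may also remark, for internal consistency, that right divisibility is transitive, so a chain built from consecutive divisibilities $\beta_1\succ\beta_2\succ\cdots$ automatically has $\beta_k\succ\beta_{k+l}$ for all $l$; only the consecutive steps are actually used above.
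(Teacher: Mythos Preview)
Your argument is correct and matches the paper's approach: the paper states Theorem~\ref{main1} explicitly ``as a corollary'' of Theorem~\ref{main0} without further proof, and you have supplied precisely the expected reduction, showing that the $\succ$-chain hypothesis on $\Cat$ implies the sequence hypothesis of Theorem~\ref{main0}. The only cosmetic remark is that your parenthetical about not needing any $A_\alpha$ to be nonzero is slightly misphrased---if $A(\gamma:\beta)\not\cong 0$ then some $A_\alpha$ \emph{is} nonzero, which is what guarantees the index set is nonempty---but this does not affect the argument.
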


The main idea of the proof of Theorem~\ref{main0} is to apply the general
criterion of perfectness obtained in~\cite{harada}. Therefore we start in
Section~\ref{preliminaries} with a result on the radical of an abelian category
and a
recollection of notions used in that work.  Section~\ref{harada} is devoted to
Harada's criterion and the study of perfectness of a class of abelian
categories, which will be useful in the sequel. In Section~\ref{mains} we  prove  the main result and in
Section~\ref{examples} we give  examples and indicate connections with 
previously known results.  

Throughout this article $R$ denotes a commutative ring with identity. 
For undefined notation the reader is referred to~\cite{semi-perfect}.

\section{Preliminaries}
\label{preliminaries}

The notion of radical for general additive categories was introduced
in~\cite{sai}. 
Let $\cat$ be an additive category. An \emph{ideal} $I$ of $\cat$ is a
collection of subgroups $I(A,B)$ of $\cat(A,B)$ for each $A$, $B\in \Ob\cat$,
such that 

\begin{align*}
	I(B,C) \cat(A,B) &\subset I(A,C)\\
	\cat(B,C) I(A,B) & \subset I(A,C).
\end{align*}

\begin{definition}[\cite{sai}]
	\label{dar}
	A \emph{radical} of an additive category 
	$\cat$ is an ideal $I$ of
	$\cat$ such that for every object $A$ of $\cat$ we have $I(A,A) = J(\cat(A,A))$, where $J$ denotes the Jacobson
	radical of the ring.
\end{definition}
	Let $\cat$ be an abelian category. 
Then $\cat$  has a unique radical. This fact was used
without explicit proof in~\cite{harada}. For completeness we provide a proof in
the appendix. 
We will also write $J$ for the radical of $\cat$. 

Given two objects  $A$, $B$ of $\cat$
	we will denote by $\pi_A\colon A\oplus
	B\to A$, $\pi_B\colon A\oplus B\to B$, $i_A\colon A\to A\oplus B$, and
	$i_B\colon B\to A\oplus B$ the canonical projections and inclusions
	associated with the definition of the direct sum $A\oplus B$. We will need the following technical property of the radical of $\cat$.
\begin{proposition}
	\label{radical}
	Let $\cat$ be an abelian category and $A$, $B$ objects of $\cat$.
	Suppose that $A= A'\oplus A''$ and $B=B'\oplus B''$. 
	 Then $J(A',B') = \pi_{B'} J(A,B) i_{A'}$.
 \end{proposition}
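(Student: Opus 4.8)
The plan is to prove the two inclusions of the asserted equality separately. The inclusion $\pi_{B'}J(A,B)i_{A'}\subseteq J(A',B')$ needs nothing beyond the fact that $J$ is an ideal of $\cat$: since $\pi_{B'}\in\cat(B,B')$ and $i_{A'}\in\cat(A',A)$, the two defining containments for an ideal give
$\pi_{B'}\,J(A,B)\,i_{A'}\subseteq\cat(B,B')\,J(A,B)\,\cat(A',A)\subseteq J(A',B')$.

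For the reverse inclusion, fix $f\in J(A',B')$ and set $h:=i_{B'}f\pi_{A'}\colon A\to B$, where $i_{B'}\colon B'\to B$ and $\pi_{A'}\colon A\to A'$ come from the decompositions $B=B'\oplus B''$ and $A=A'\oplus A''$. Since $\pi_{B'}i_{B'}=1_{B'}$ and $\pi_{A'}i_{A'}=1_{A'}$, we get $\pi_{B'}h\,i_{A'}=f$, so it suffices to prove $h\in J(A,B)$. For this I pass to the ring $S:=\cat(M,M)$ with $M:=A\oplus B$, identified in the canonical way with $A'\oplus A''\oplus B'\oplus B''$; let $\bar\imath_X\colon X\to M$ and $\bar\pi_X\colon M\to X$ denote the inclusions and projections for this four-fold decomposition, and put $\tilde h:=\bar\imath_{B'}f\bar\pi_{A'}\in S$. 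Compatibility of the iterated direct sums gives $\tilde h=i_B h\pi_A$, hence also $h=\pi_B\tilde h\,i_A$; so once we know $\tilde h\in J(M,M)$, the ideal property of $J$ yields $h=\pi_B\tilde h\,i_A\in J(A,B)$, as wanted. Since $J(M,M)=J(\cat(M,M))$ is the Jacobson radical of the ring $S$ (Definition~\ref{dar}), it remains to prove $\tilde h\in J(S)$.

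Here I use the elementary fact that an element $x$ of a ring $S$ lies in $J(S)$ as soon as $1-sx$ is invertible for every $s\in S$. Writing endomorphisms of $M$ as $4\times4$ matrices of morphisms with respect to $A'\oplus A''\oplus B'\oplus B''$ (composition becoming matrix multiplication), the endomorphism $\tilde h=\bar\imath_{B'}f\bar\pi_{A'}$ has a single nonzero entry, namely $f$ in position $(B',A')$. Hence for any $s\in S$ the product $s\tilde h$ vanishes outside the column indexed by $A'$, and its $(A',A')$-entry is $g_1f$ with $g_1:=\bar\pi_{A'}s\,\bar\imath_{B'}\in\cat(B',A')$. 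Therefore $1_M-s\tilde h$ is block lower triangular (ordering $A'$ first) with diagonal blocks $1_{A'}-g_1f$, $1_{A''}$, $1_{B'}$, $1_{B''}$. The last three are invertible, and $g_1f\in\cat(B',A')\,J(A',B')\subseteq J(A',A')=J(\cat(A',A'))$, so $1_{A'}-g_1f$ is invertible as well; a block lower triangular matrix with invertible diagonal blocks is invertible (its strictly lower part being nilpotent), so $1_M-s\tilde h$ is an isomorphism. As $s$ was arbitrary, $\tilde h\in J(S)$, which completes the proof.

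The argument is almost entirely formal. The one point that calls for care is the reduction to $S=\cat(A\oplus B,A\oplus B)$ together with the translation of composition into block-matrix multiplication --- in particular, checking that the iterated direct-sum identifications are the canonical ones, so that $\tilde h=i_Bh\pi_A$ and $h=\pi_B\tilde h\,i_A$ genuinely hold. Beyond that, only the ideal axioms for $J$, the equality $J(X,X)=J(\cat(X,X))$ from Definition~\ref{dar}, and the standard characterisation of the Jacobson radical of a ring are used.
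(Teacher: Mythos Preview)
Your proof is correct, but the reverse inclusion is established by a substantially longer route than the paper takes. Once you have written $h=i_{B'}f\pi_{A'}$ and noted $\pi_{B'}h\,i_{A'}=f$, the membership $h\in J(A,B)$ follows \emph{immediately} from the ideal axioms: $i_{B'}\in\cat(B',B)$ and $\pi_{A'}\in\cat(A,A')$, so $i_{B'}\,J(A',B')\,\pi_{A'}\subseteq J(A,B)$. That is exactly the paper's argument, which fits on one line:
\[
J(A',B')=1_{B'}J(A',B')1_{A'}=\pi_{B'}\,i_{B'}J(A',B')\pi_{A'}\,i_{A'}\subseteq \pi_{B'}J(A,B)i_{A'}.
\]
Your detour through $M=A\oplus B$, the $4\times4$ matrix description, and the quasi-regularity criterion for $J(S)$ is sound but unnecessary; indeed, you still invoke the ideal property at the key step $g_1f\in\cat(B',A')J(A',B')\subseteq J(A',A')$, so the very tool that would have finished the job directly is already in play. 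What your approach does buy is self-containment: it essentially re-verifies, in this special case, the computation carried out in the paper's Appendix when constructing $J$, rather than relying on $J$ already being an ideal. But as a proof of the proposition as stated, the two-line version is preferable.
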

\begin{proof}
	Since $J$ is an ideal we have $\pi_{B'} J\left( A,B \right)i_{A'}\subset
	J\left( A',B' \right)$. Also 
	$$
	J\left( A',B' \right) = 1_{B'} J\left( A',B' \right) 1_{A'} =
	\pi_{B'}i_{B'} J\left( A',B' \right) \pi_{A'} i_{A'} \subset
	\pi_{B'}J\left( A,B \right) i_{A'}
	$$
	and the desired equality follows. 
\end{proof} 

Next we introduce some standard notation which will be used in the following
sections. 

We say that $X\subset Y$ is \emph{a small subobject} of $Y$ if for any $S\subset Y$
such that $X+S= Y$ we have $S=Y$. An epimorphism $\pi\colon P\twoheadrightarrow
Y$, where $P$ is projective, is  called a \emph{projective cover} of Y whenever
$\Ker \pi$ is a small subobject of $P$. 

Note that in a perfect abelian category every object has a (unique up to
isomorphism) \emph{minimal projective resolution}. By definition a minimal
projective resolution of an object $X$ is an exact complex
$(P_{\bullet},d_{\bullet})$ with a
map $\varepsilon\colon P_0\twoheadrightarrow X$, such that 
the maps $d_k\colon P_{k+1} \to \Ker (d_{k-1})$ and $\varepsilon$ are  projective
covers. The existence
of minimal projective resolutions in a perfect category can be shown by
induction. 

\section{Harada criterion}
\label{harada}
In this section we give a sufficient and a necessary condition for a Grothendieck category
$\cat$ to be perfect. These are based on Harada's criterion of perfectness,
Corollary~1~p.338~of~\cite{harada}. 
The crucial ingredient of  this criterion
is the
notion of  $T$-nilpotent system. 
\begin{definition} A set of objects $\left\{\, M_i \,\middle|\,  i\in I \right\}$ in an
	abelian category is called
	\emph{$T$-nilpotent}  if for any sequence of maps
	$f_k\in J\left( M_{i_k}, M_{i_{k+1}} \right)$, $k=1,2,\dots$, and every small subobject
	$X$ of $M_{i_1}$ there is a natural number $m$ such that $f_mf_{m-1}\dots
	f_1(X)=0$.
\end{definition} 

\begin{definition}
	Let $\cat$ be an abelian category. 
	We say that an object $B\in \cat$ is \emph{semi-perfect}
	(\emph{completely indecomposable}) if the ring $\cat(B,B)$ is
	semi-perfect (local). 
\end{definition}
Note that our definition of semi-perfect object is different from the definition
given in~\cite{harada} on p.~330, but this does not interfere with the work.  

Let $\left\{\, P_\alpha \,\middle|\,  \alpha\in I \right\}$ \label{construction} be a generating set
of semi-perfect objects of an abelian  category $\cat$. Then each ring
$\cat(P_\alpha,P_\alpha)$ is semi-perfect. By Theorem~27.6
of~\cite{anderson-fuller}, for each $\alpha$ the ring
$\cat(P_{\alpha},P_{\alpha})$ has a complete set of orthogonal idempotents
$e_{\alpha,1}$, $e_{\alpha,2}$, \dots, $e_{\alpha,n_{\alpha}}$ and for every
$\alpha\in I$ and every $1\le j \le n_{\alpha}$ the ring  
$e_{\alpha,j}\cat(P_{\alpha}, P_{\alpha})e_{\alpha,j}$  is local. We denote by
$P_{\alpha,j}$ the direct summand of $P_\alpha$ that corresponds to
$e_{\alpha,j}$. We also write $\pi_{\alpha,j}$ for the canonical projection of
$P_{\alpha}$ on $P_{\alpha,j}$ and $i_{\alpha,j}$ for the canonical embedding of
$P_{\alpha,j}$ in $P_{\alpha}$. 
\begin{proposition}
	\label{indecomposable}
	The objects $P_{\alpha,j}$ are completely indecomposable.
\end{proposition}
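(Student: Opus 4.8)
The plan is to identify the endomorphism ring $\cat(P_{\alpha,j},P_{\alpha,j})$ with the corner ring $e_{\alpha,j}\cat(P_\alpha,P_\alpha)e_{\alpha,j}$, which is local by the construction preceding the statement; since by definition an object $B$ is completely indecomposable precisely when $\cat(B,B)$ is local, this finishes the proof.

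First I would recall the general fact underlying the correspondence between idempotents and direct summands. Let $P$ be an object of an additive category and let $P'$ be a direct summand of $P$ with inclusion $i\colon P'\to P$ and projection $\pi\colon P\to P'$, so that $\pi i = 1_{P'}$ and the idempotent realized by this summand is $e = i\pi\in\cat(P,P)$. Then the assignment $f\mapsto i f\pi$ defines a ring isomorphism $\cat(P',P')\to e\cat(P,P)e$, with inverse $g\mapsto \pi g i$. Verifying this is a short formal computation: the two composites are identities because $\pi i = 1_{P'}$; multiplicativity follows by inserting $i\pi = e$ between two factors and cancelling via $\pi i = 1_{P'}$ again; and the image lies in $e\cat(P,P)e$ since $e(if\pi)e = i(\pi i)f(\pi i)\pi = if\pi$.

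Then I would apply this with $P = P_\alpha$, $e = e_{\alpha,j}$, $P' = P_{\alpha,j}$, $i = i_{\alpha,j}$, $\pi = \pi_{\alpha,j}$, obtaining a ring isomorphism
$$
\cat(P_{\alpha,j},P_{\alpha,j}) \cong e_{\alpha,j}\cat(P_\alpha,P_\alpha)e_{\alpha,j}.
$$
By the choice of the idempotents $e_{\alpha,j}$ (Theorem~27.6 of~\cite{anderson-fuller}) the right-hand side is a local ring, hence so is $\cat(P_{\alpha,j},P_{\alpha,j})$; that is, $P_{\alpha,j}$ is completely indecomposable.

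There is essentially no obstacle here: the only content is the standard correspondence between idempotents and summands, and the whole argument is a routine diagram chase. The single point worth stating with care is that this correspondence is an isomorphism of \emph{rings}, not merely of abelian groups, since it is the multiplicative structure that transports locality from the corner ring to $\cat(P_{\alpha,j},P_{\alpha,j})$.
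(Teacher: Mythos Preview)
Your proof is correct and follows essentially the same route as the paper's own argument: identify $\cat(P_{\alpha,j},P_{\alpha,j})$ with the corner ring $e_{\alpha,j}\cat(P_\alpha,P_\alpha)e_{\alpha,j}$, which is local by the choice of idempotents, and conclude. The only difference is that you spell out the ring isomorphism $\cat(P',P')\cong e\cat(P,P)e$ explicitly, whereas the paper simply asserts it.
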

\begin{proof}
	Since $e_{\alpha,1}$, \dots, $e_{\alpha,n_{\alpha}}$ is a complete
	orthogonal set of idempotents the ring $\cat(P_{\alpha,j},
	P_{\alpha,j})\cong e_{\alpha,j} \cat(P_{\alpha},
	P_{\alpha})e_{\alpha,j}$ is local. Thus $P_{\alpha,j}$ is
	completely indecomposable.
\end{proof}

\begin{proposition}
	\label{new}
	Let $\cat$ be a Grothendieck category with a generating set of finitely
	generated objects. Suppose $\cat$ has a generating set $\left\{\,
	P_{\alpha}
	\,\middle|\, \alpha\in I \right\}$ of semi-perfect projective objects.
	If $\left\{\, P_{\alpha} \,\middle|\, \alpha\in I \right\}$ is a 
	$T$-nilpotent system then $\cat$ is perfect. 
\end{proposition}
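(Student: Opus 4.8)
The plan is to invoke Harada's criterion (Corollary~1~p.338~of~\cite{harada}), which characterises perfectness of a Grothendieck category in terms of the existence of a generating set consisting of completely indecomposable projective objects that forms a $T$-nilpotent system. So the proof splits into two tasks: first, produce from the given generating set $\{P_\alpha\}$ a generating set of \emph{completely indecomposable} projectives; second, verify that this refined set is $T$-nilpotent, given that $\{P_\alpha\}$ is.

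For the first task I would use the construction already set up on p.~\pageref{construction}: each $\cat(P_\alpha,P_\alpha)$ is semi-perfect, so it carries a complete orthogonal set of idempotents $e_{\alpha,1},\dots,e_{\alpha,n_\alpha}$ with local corners, and the corresponding summands $P_{\alpha,j}$ are completely indecomposable by Proposition~\ref{indecomposable}. Since $P_\alpha\cong\bigoplus_j P_{\alpha,j}$, the set $\{P_{\alpha,j}\}$ still generates $\cat$ (any object is a quotient of a direct sum of the $P_\alpha$, hence of a direct sum of the $P_{\alpha,j}$), and each $P_{\alpha,j}$ is projective as a summand of a projective. That part is essentially bookkeeping.

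The main work is the second task: showing $\{P_{\alpha,j}\}$ is $T$-nilpotent, i.e.\ for any sequence of radical maps $g_k\in J(P_{\alpha_k,j_k},P_{\alpha_{k+1},j_{k+1}})$ and any small subobject $X$ of $P_{\alpha_1,j_1}$, some finite composite kills $X$. The idea is to lift each $g_k$ to a map between the big objects: set $f_k = i_{\alpha_{k+1},j_{k+1}}\, g_k\, \pi_{\alpha_k,j_k}\in\cat(P_{\alpha_k},P_{\alpha_{k+1}})$, which lies in $J(P_{\alpha_k},P_{\alpha_{k+1}})$ by Proposition~\ref{radical} (with the splittings $P_\alpha = P_{\alpha,j}\oplus(\text{rest})$). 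Also $i_{\alpha_1,j_1}(X)$ is a small subobject of $P_{\alpha_1}$, since a direct summand inclusion carries small subobjects to small subobjects. By $T$-nilpotency of $\{P_\alpha\}$ there is $m$ with $f_m\cdots f_1\,i_{\alpha_1,j_1}(X)=0$. Using the idempotent relations $\pi_{\alpha_k,j_k}\,i_{\alpha_k,j_k}=1$ at each intermediate stage, the composite $f_m\cdots f_1\,i_{\alpha_1,j_1}$ equals $i_{\alpha_{m+1},j_{m+1}}\,(g_m\cdots g_1)$, and since $i_{\alpha_{m+1},j_{m+1}}$ is a monomorphism we conclude $g_m\cdots g_1(X)=0$, as required.

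The step I expect to be the main obstacle is the clean verification that the $\pi\,i$ telescoping actually produces $i_{\alpha_{m+1},j_{m+1}}(g_m\cdots g_1)$ on the nose and that the auxiliary facts ``a summand inclusion preserves smallness of subobjects'' and ``$\{P_{\alpha,j}\}$ generates'' hold in the stated generality; none of these is deep, but they must be pinned down carefully so that Harada's criterion applies verbatim. Once $\{P_{\alpha,j}\}$ is known to be a generating set of completely indecomposable projectives and $T$-nilpotent, perfectness of $\cat$ follows directly from Harada's criterion, completing the proof.
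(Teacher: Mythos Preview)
Your proposal is correct and follows essentially the same route as the paper: refine $\{P_\alpha\}$ to the completely indecomposable summands $\{P_{\alpha,j}\}$ via the semi-perfect endomorphism rings, then transfer $T$-nilpotency from the big objects to the summands and invoke Harada's criterion. The only cosmetic difference is that the paper first uses Proposition~\ref{radical} to write each small radical map as $\pi\,\widetilde f\,i$ and then sets $g_k=i\pi\widetilde f_k$, whereas you lift directly via $f_k=i\,g_k\,\pi$ and telescope; the resulting computation and conclusion are identical.
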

\begin{proof}
	In this proof we are going to apply
	Corollary~1~on~p.338~of~\cite{harada}. This claims
	that if $\cat$ has a generating set of finitely generated objects and
	$\left\{\, Q_{\beta} \,\middle|\, \beta\in K \right\}$ is a
	$T$-nilpotent generating set of completely indecomposable projective
	objects, then
	$\cat$ is perfect. Thus we have to construct a $T$-nilpotent generating
	set of completely indecomposable projective objects. 
	
	If we apply the construction described above to $\left\{\, P_{\alpha} \,\middle|\,
	\alpha\in I \right\}$, we get a generating set $\mathcal{G} = \left\{\, P_{\alpha,j}
	\,\middle|\, \alpha\in I,\ j=1,\dots,n_{\alpha} \right\}$. Every object
	$P_{\alpha,j}$ is a direct summand of $P_{\alpha}$ and so
	$P_{\alpha,j}$ is projective. The object $P_{\alpha,j}$ is also
	completely indecomposable by Proposition~\ref{indecomposable}. 

	Now we will show that $\mathcal{G}$ is $T$-nilpotent. Let
	$P_{\alpha_1,j_1}$, $P_{\alpha_2,j_2}$, \dots be a sequence of objects
	in $\mathcal{G}$ and $f_k\in J(P_{\alpha_k,j_k},
	P_{\alpha_{k+1},j_{k+1}})$. 
	From Proposition~\ref{radical} it follows that $J(P_{\alpha_k,j_k},P_{\alpha_{k+1},j_{k+1}})=
	\pi_{\alpha_{k+1},j_{k+1}} J(P_{\alpha_k},P_{\alpha_{k+1}})
	i_{\alpha_k,j_k}$.
	Thus there is $\widetilde{f}_k\in
	J(P_{\alpha_k},P_{\alpha_{k+1}})$ such that $f_k =
	\pi_{\alpha_{k+1},j_{k+1}}  \widetilde{f}_k i_{\alpha_k,j_k}$. 
	Denote by $g_k$ the element $i_{\alpha_{k+1},j_{k+1}}\pi_{\alpha_{k+1},j_{k+1}}
	\widetilde{f}_k$
	of $J(P_{\alpha_k}, P_{\alpha_{k+1}})$. Then we have
	$$
	f_r\dots f_1 = \pi_{\alpha_{r+1},j_{r+1}} g_r\dots g_1 i_{\alpha_1,j_1}.
	$$
	Let $X$ be a small subobject of $P_{\alpha_1,j_1}$. Then
	$i_{\alpha_1,j_1}(X)$ is a small subobject of $P_{\alpha}$.
	Since $\left\{\, P_{\alpha} \,\middle|\, \alpha\in I \right\}$ is
	$T$-nilpotent 
	there is some $n$ such that 
	$$
	g_n\dots g_1 i_{\alpha_1,j_1}(X) =0.
	$$
	But then $f_n\dots f_1(X) =0$. 
\end{proof}

To prove the next proposition we shall use the following consequence of Axiom of
Choice (see example~1 to Theorem~III.7.4.1 of \cite{bourbaki}).

\begin{lemma}
	\label{AC}
	Let $g_k\colon S_{k+1}\to S_{k}$, $k\in \N$,  be a sequence of maps between finite
	non-empty sets. Then 
	$$
	\varprojlim_{k}S_k := \left\{\, \left( s_k \right)_{k\in \N} \,\middle|\, s_k\in S_k,\
	g_k\left( s_{k+1}  \right) = s_k\right\}
	$$
	is a non-empty set.
\end{lemma}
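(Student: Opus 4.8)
The plan is to realise an element of $\varprojlim_k S_k$ as an infinite thread through the eventually-stable images of the transition maps. For $k\le m$ write $g_{k,m}\colon S_m\to S_k$ for the composite $g_k\circ g_{k+1}\circ\dots\circ g_{m-1}$, with the convention $g_{k,k}=\mathrm{id}_{S_k}$, and set
$$
T_k=\bigcap_{m\ge k}\im g_{k,m}\subseteq S_k .
$$
Since each $S_m$ is non-empty, every $\im g_{k,m}$ is non-empty; and for fixed $k$ the sets $\im g_{k,m}$ form a decreasing chain of subsets of the \emph{finite} set $S_k$ as $m$ grows. Hence this chain stabilises: there is $N_k\in\N$ with $T_k=\im g_{k,m}$ for all $m\ge N_k$, and in particular $T_k\ne\varnothing$.

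First I would check that $g_k$ maps $T_{k+1}$ onto $T_k$. The inclusion $g_k(T_{k+1})\subseteq T_k$ follows from $g_k\bigl(\im g_{k+1,m}\bigr)=\im g_{k,m}$ for every $m\ge k+1$. For the reverse inclusion, pick any $m\ge\max(N_k,N_{k+1})$; then $g_k(T_{k+1})=g_k\bigl(\im g_{k+1,m}\bigr)=\im g_{k,m}=T_k$, so $g_k|_{T_{k+1}}\colon T_{k+1}\to T_k$ is surjective.

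With this in hand, the element of the inverse limit is built by recursion, and this is the point where the Axiom of Choice, in the guise of dependent choice, enters, matching the status of the lemma as stated. Choose $s_1\in T_1$, which is possible as $T_1\ne\varnothing$; having chosen $s_k\in T_k$, use the surjectivity of $g_k|_{T_{k+1}}$ to choose $s_{k+1}\in T_{k+1}$ with $g_k(s_{k+1})=s_k$. By construction $(s_k)_{k\in\N}$ satisfies $s_k\in S_k$ and $g_k(s_{k+1})=s_k$ for all $k$, so it is a well-defined element of $\varprojlim_k S_k$, proving it non-empty.

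I do not expect a genuine obstacle here: the argument is the standard one and could alternatively be replaced by a direct appeal to Bourbaki. The only points requiring a little care are the simultaneous stabilisation of the image chains for two consecutive indices (handled by taking $m$ large enough for both) and being explicit that the final recursive selection is a use of choice, which is consistent with how the lemma is advertised.
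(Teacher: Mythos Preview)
Your argument is correct; the eventual-image construction $T_k=\bigcap_{m\ge k}\im g_{k,m}$ followed by a recursive selection is the standard proof, and the care you take with simultaneous stabilisation at consecutive indices is exactly right. The paper, however, does not prove this lemma at all: it simply records it as a known consequence of the Axiom of Choice and cites Bourbaki (example~1 to Theorem~III.7.4.1). So your proposal is strictly more informative than what appears in the paper. One minor remark: since each $T_{k+1}$ is finite, the recursive step can be made canonical (fix an enumeration of each $S_k$ and always pick the least preimage), so in fact no form of choice is needed here despite how the lemma is advertised; this does not affect the validity of what you wrote.
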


\begin{proposition}
	\label{harada3}
	Let $\cat$ be a perfect Grothendieck category with a generating set
	$\left\{\, P_{\alpha} \,\middle|\, \alpha\in I \right\}$ of finitely generated
	projective objects. Then $\left\{\, P_{\alpha} \,\middle|\, \alpha\in I
	\right\}$ is $T$-nilpotent. 
\end{proposition}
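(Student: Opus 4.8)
The plan is to argue by contradiction, extracting from a hypothetical failure of $T$-nilpotency a nonzero "limit" subobject that contradicts perfectness. Suppose $\{P_\alpha \mid \alpha \in I\}$ is not $T$-nilpotent. Then there is a sequence of maps $f_k \in J(P_{\alpha_k}, P_{\alpha_{k+1}})$ and a small subobject $X$ of $P_{\alpha_1}$ such that $f_m \cdots f_1(X) \neq 0$ for every $m$. Write $X_m = f_m \cdots f_1(X) \subseteq P_{\alpha_{m+1}}$ (with $X_0 = X$), so each $X_m \neq 0$ and $f_{m}(X_{m-1}) = X_m$. First I would reduce to a situation where the objects $P_{\alpha_k}$ are completely indecomposable: apply the construction on page~\pageref{construction} to split each $P_{\alpha_k}$ into a finite direct sum of completely indecomposable projectives $P_{\alpha_k,j}$. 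Since $X_{m-1} \neq 0$ and $X_m = f_m(X_{m-1}) \neq 0$, and $f_m = \sum_j \sum_{j'} i_{\alpha_{m+1},j'}\pi_{\alpha_{m+1},j'} f_m i_{\alpha_m,j}\pi_{\alpha_m,j}$, a pigeonhole/finiteness argument lets me select, for each $m$, indices $j_m$ so that the composite of the corresponding components $\pi_{\alpha_{m+1},j_{m+1}} f_m i_{\alpha_m,j_m}$ along the chain does not annihilate (a piece of) the image of $X$. Making this selection coherently across all $m$ is exactly where Lemma~\ref{AC} enters: the finite sets $S_m$ of admissible index-choices at stage $m$, together with the restriction maps coming from "the tail from stage $m+1$ still witnesses non-nilpotency," form an inverse system of finite nonempty sets, so a global choice exists.

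Having passed to completely indecomposable $P_{\alpha_k,j_k}$ with $f_k \in J(P_{\alpha_k,j_k}, P_{\alpha_{k+1},j_{k+1}})$ and $f_m \cdots f_1(X) \neq 0$ for all $m$, I would now build the offending object. Since $\cat$ is perfect, every object has a projective cover; in particular form the "amalgamated" subobject or an increasing/decreasing system and take a colimit (the Grothendieck hypothesis guarantees the colimit is exact and well-behaved). Concretely, I expect the right construction is: let $Y_m = f_m\cdots f_1(X) \subseteq P_{\alpha_{m+1},j_{m+1}}$, and note each inclusion $Y_m \hookrightarrow P_{\alpha_{m+1},j_{m+1}}$ together with $f_{m+1}|_{Y_m}\colon Y_m \to Y_{m+1}$ defines a direct system; let $Z = \varinjlim Y_m$. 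The maps $P_{\alpha_{m+1},j_{m+1}} \to P_{\alpha_{m+2},j_{m+2}}$ given by $f_{m+1}$ assemble the $P_{\alpha_{k},j_k}$ into a direct system as well, and $Z$ is a subobject of its colimit $P := \varinjlim P_{\alpha_k,j_k}$.

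The key point is then to derive a contradiction with perfectness, and this is where I expect the main obstacle. The idea is that $P$, being a directed colimit of the projectives $P_{\alpha_k,j_k}$ along maps lying in the radical $J$, cannot itself have a projective cover unless the system is eventually nilpotent on small subobjects — precisely the $T$-nilpotency we assumed fails. More carefully: in a perfect category $\cat(P,P)/J(\cat(P,P))$ is semisimple and projective covers behave well, and one shows that the canonical map $P_{\alpha_1,j_1} \to P$ has image meeting $X$'s trace nontrivially in every "layer," so that the projective cover of $P$ would have to absorb an infinite descending chain forced by the $f_k \in J$, contradicting that $\operatorname{Ker}$ of a projective cover is small. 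The technical heart is verifying that the colimit $Z$ (or the relevant subobject of $P$) is \emph{small} in $P$ yet nonzero and stable under the radical action — using that each $X$ was small in $P_{\alpha_1}$, that small subobjects push forward to small subobjects along the canonical embeddings (as already observed in the proof of Proposition~\ref{new}), and that directed colimits of small subobjects along radical maps remain small in a Grothendieck category. Combining smallness of $Z$ with $Z \neq 0$ and the structure of the projective cover of $P$ yields the contradiction, completing the proof that $\{P_\alpha \mid \alpha \in I\}$ must be $T$-nilpotent.
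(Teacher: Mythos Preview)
Your reduction step --- decomposing each $P_{\alpha_k}$ into finitely many completely indecomposable summands and using Lemma~\ref{AC} on the inverse system of finite index sets $S_m$ to extract a coherent choice of components $(l_k)$ --- is exactly what the paper does, and it is correct. Where you diverge is in what happens \emph{after} the reduction. The paper does not build any colimit at all: once one has a sequence $h_m\in J(P_{\alpha_m,l_m},P_{\alpha_{m+1},l_{m+1}})$ with $h_m\cdots h_1(Y)\neq 0$ for all $m$, the contradiction is immediate by invoking Harada's own result (Corollary~1 of Theorem~4 in~\cite{harada}), which says that in a perfect Grothendieck category any generating set of completely indecomposable projectives is already $T$-nilpotent. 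The whole second half of your sketch is an attempt to reprove that statement from scratch, and it is not needed.

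More seriously, your colimit argument has genuine gaps. The assertion that ``directed colimits of small subobjects along radical maps remain small in a Grothendieck category'' is not a standard fact and is in general false: smallness is not preserved under colimits, and the maps $f_k$ being in the radical does not help. Likewise, the discussion of the projective cover of $P=\varinjlim P_{\alpha_k,j_k}$ is vague --- you say the cover ``would have to absorb an infinite descending chain'' but do not say what that chain is or why its existence is incompatible with the kernel being small. You yourself flag this as ``the main obstacle'' and ``the technical heart,'' and indeed it is: making this route rigorous would essentially amount to reproving Harada's criterion. The fix is simply to stop after the Lemma~\ref{AC} step and cite~\cite{harada} for the $T$-nilpotency of the completely indecomposable system.
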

\begin{proof}
	By Theorem~7.2 of \cite{semi-perfect} every object $P_{\alpha}$ is
	semi-perfect. Let $\left\{ e_{\alpha,1}, \dots, e_{\alpha,n_{\alpha}}
	\right\}$ be a complete set of orthogonal idempotents for $\cat\left(
	P_\alpha,P_\alpha \right)$, $\alpha\in I$. Denote by $P_{\alpha,j}$ the
	direct summand that corresponds to $e_{\alpha,i}$. Then $\left\{\,
	P_{\alpha,j}
	\,\middle|\, \alpha\in I,\ 1\le j\le n_{\alpha} \right\}$ is a generating set of completely indecomposable objects. By
	Corollary~1 of Theorem~4 in \cite{harada} $\left\{\, P_{\alpha,j} \,\middle|\,
	\alpha\in I,\ 1\le j\le n_{\alpha} \right\}$
	is $T$-nilpotent, since $\cat$ is perfect. 

	Now suppose that $\left\{\, P_\alpha \,\middle|\, \alpha\in I \right\}$ is not $T$-nilpotent.
	Then there is a sequence $f_k\in J\left( P_{\alpha_k},
	P_{\alpha_{k+1}} \right)$ and a small subobject $X$ of $P_{\alpha_1}$
	such that for every $m\in \N$
	$$
	f_m \dots f_1\left( X \right)\not= 0
	$$
	which is the same as
	$$
 \sum_{j_1=1}^{n_{\alpha_1}}\dots
	\sum_{j_{m+1}=1}^{n_{\alpha_{m+1}}} e_{\alpha_{m+1},j_{m+1}}f_m
	e_{\alpha_m,j_m} \dots e_{\alpha_2,j_2}f_1e_{\alpha_1,j_1}\left( X
	\right)\not= 0.
	$$
	Denote by $S_{m+1}$ the subset of $\left\{ 1,\dots,n_{\alpha_1}
	\right\}\times \dots\times \left\{ 1,\dots,n_{\alpha_{m+1}} \right\}$ of
	elements $\left( j_1,\dots,j_{m+1} \right)$ such that
	$$
	e_{\alpha_{m+1},j_{m+1}} f_me_{\alpha_m,j_m}\dots e_{\alpha_2,j_2}f_1
	e_{\alpha_1,j_1}\left( X \right)\not=0. 
	$$
	Then $S_m$ are finite non-empty sets for every $m\in \N$, and we have maps
	\begin{align*}
		g_m\colon S_{m+1} &\to S_m\\
		\left( j_1,\dots,j_{m+1} \right)&\mapsto \left( j_1,\dots,j_m
		\right).
	\end{align*}
	From Lemma~\ref{AC} it follows that there is a sequence $\left( l_k
	\right)_{k\in\N}$ such that $\left( l_1,\dots,l_m \right)\in S_m$ for
	every $m$. Define $h_m = \pi_{\alpha_{m+1},l_{m+1}}f_m
	i_{\alpha_m,l_m}$. Then $h_m\in J\left( P_{\alpha_m,l_m},
	P_{\alpha_{m+1},l_{m+1}} \right)$. Moreover, for every $m\in \N$
	$$
	i_{\alpha_{m+1},l_{m+1}}h_m\dots h_1(\pi_{\alpha_1,l_1}X) = e_{\alpha_{m+1},l_{m+1}} f_me_{\alpha_m,l_m}\dots e_{\alpha_2,l_2}f_1
	e_{\alpha_1,l_1}\left( X \right) \not=0.
	$$
	Thus $Y= \pi_{\alpha_1,l_1}\left(X\right)$ is a small subobject of $P_{\alpha_1,l_1}$
	such that for all $m\in \N$
	$$
		h_m \dots h_1\left( Y \right)\not= 0,
	$$
	which contradicts the fact that  $\left\{\, P_{\alpha,j} \,\middle|\,
	\alpha\in I,\ 1\le j\le n_{\alpha}
	\right\}$ is
	$T$-nilpotent. 
\end{proof}
\section{The main result}
\label{mains}
Let $\Cat$ be a small category. We define a \emph{$\Cat$-graded $R$-module} $V$ 
to be a 
collection of $R$-modules $V_{\gamma}$ parametrized by the arrows 
$\gamma\in\Cat$. A morphism from a $\Cat$-graded $R$-module $V$ to a $\Cat$-graded
$R$-module $W$ is a collection of $R$-module homomorphisms $f_\gamma\colon V_\gamma\to
W_\gamma$. We will write $\Vect_\Cat$ for the category of $\Cat$-graded $R$-modules and
$\Vect$ for the category of $R$-modules.  

Next we indicate how the results of~\cite{semi-perfect} can be extended from the
case when $R$ is a field to the case of a general commutative ring. 
Let $A$ be a $\Cat$-graded $R$-algebra with multiplication map $\mu$  and $V$ a $\Cat$-graded $R$-module.
Consider the
functor 
$$
F_A\colon \Vect_\Cat\to A\mbox{-mod}
$$
given on objects by the formula 
$$
F_A\left( V \right)_{\gamma} = \bigoplus_{\gamma=\alpha\beta} A_\alpha\otimes
V_\beta
$$
with structure maps $r_{\delta,\gamma}\colon A_{\delta}\otimes F_A\left(
V \right)_{\gamma} \to F_A\left( V \right)_{\delta\gamma}$ defined by the
requirement that its restriction to the component $A_{\delta}\otimes
A_{\alpha} \otimes V_{\beta}$ is $\mu_{\delta,\alpha}\otimes V_{\beta}$. 
On morphisms $F_A$ is defined by requirement that the restriction of $F_A\left( f
\right)_{\gamma}$ to $A_{\alpha}\otimes V_\beta$ is
$A_{\alpha}\otimes f_{\beta}$ for $\alpha$, $\beta$ such that $\alpha\beta=\gamma$. 

Repeating the proof of \cite[Proposition~2.1]{semi-perfect} we get
	that the functor $F_A$ is a left adjoint to the forgetful functor $U\colon
	A\mbox{-mod}\to \Vect_\Cat$. 
The counit $\varepsilon$ of this adjunction is given by the structure
maps of $A$-modules. Namely, if $M$ is an $A$-module with structure maps $r_{\alpha,\beta}$ then the $\left( \alpha,\beta \right)$
component of $\varepsilon_{\gamma}\colon F_A\left( M \right)_{\gamma} \to
M_\gamma$ is $r_{\alpha,\beta}$. 
From the existence of local units it follows that the maps
$\varepsilon_{\gamma}$ are surjective for all $\gamma\in \Cat$.

The proofs of Propositions~3.1,~4.2, and~4.3
of~\cite{semi-perfect} can be
extended without any changes to the case of general $R$. As a consequence we get 

\begin{proposition}
	\label{abelian} Let $A$ be a $\Cat$-graded $R$-algebra. Then the
	category $A$-mod is Grothendieck. In particular, $A$-mod is a complete and
cocomplete abelian category.\end{proposition}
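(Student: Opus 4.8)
The plan is to establish the three defining features of a Grothendieck category for $A$-mod by transporting them from the known structure of $\Vect_\Cat$ through the adjunction $(F_A, U)$. First I would observe that $\Vect_\Cat$ is itself a Grothendieck category: it is a product of copies of $\Vect = R\text{-mod}$ indexed by the arrows of $\Cat$, and $R\text{-mod}$ is Grothendieck for any commutative ring $R$ with identity, so $\Vect_\Cat$ is complete, cocomplete, abelian, has exact filtered colimits, and has a generator (namely the coproduct over $\gamma\in\Cat$ of the graded modules concentrated in degree $\gamma$ with value $R$). This part is routine and I would state it without much fuss.

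Next I would verify that $A$-mod is abelian. The point is that kernels and cokernels in $A$-mod are computed degreewise, i.e.\ $U$ creates them: given an $A$-homomorphism $f\colon M\to N$, the collection $(\Ker f_\gamma)_\gamma$ inherits $A$-module structure maps by restriction because the $r_{\alpha,\beta}$ are $R$-linear and compatible with $f$, and similarly for cokernels $(N_\gamma/\im f_\gamma)_\gamma$; one then checks the first isomorphism theorem holds degreewise, hence in $A$-mod. Completeness and cocompleteness follow the same way: products and coproducts in $A$-mod are formed degreewise (a product of $A$-modules is the degreewise product with the obvious structure maps, using that $\otimes$ distributes over coproducts but not products — so for the coproduct one uses $A_\alpha\otimes(\bigoplus_i M^{(i)}_\beta)\cong\bigoplus_i(A_\alpha\otimes M^{(i)}_\beta)$, while for products one needs the canonical map, which is fine since we only need the universal property), and together with kernels and cokernels this yields all limits and colimits. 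In short, $U\colon A\text{-mod}\to\Vect_\Cat$ creates all limits and colimits.

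It then remains to produce a generator and to check that filtered colimits are exact. Exactness of filtered colimits is immediate once we know colimits are computed degreewise, since filtered colimits are exact in each $\Vect = R\text{-mod}$. For the generator, I would use $F_A$: since $F_A$ is left adjoint to $U$ and $U$ is faithful (it is literally the identity on underlying data), $F_A$ sends a generator of $\Vect_\Cat$ to a generator of $A$-mod. Concretely, if $G$ generates $\Vect_\Cat$ then for any $A$-module $M$ and any nonzero map out of $M$, faithfulness of $U$ plus the adjunction isomorphism $\mathrm{Hom}_{A\text{-mod}}(F_A(G),M)\cong\mathrm{Hom}_{\Vect_\Cat}(G,U(M))$ shows $F_A(G)$ detects it; alternatively one checks directly that every $A$-module is a quotient (in fact a homomorphic image under the counit $\varepsilon$) of a coproduct of copies of the objects $F_A(\mathbf{R}_\gamma)$, where $\mathbf{R}_\gamma$ is $R$ placed in degree $\gamma$, and this already gives a generating set. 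Assembling these points — abelian, complete, cocomplete, exact filtered colimits, a generator — gives that $A$-mod is Grothendieck, and the ``in particular'' clause is then automatic.

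The main obstacle, such as it is, is bookkeeping rather than depth: one must be careful that the structure maps $r_{\alpha,\beta}$ genuinely restrict and corestrict to subobjects and quotients formed degreewise (this uses $R$-linearity essentially and nothing more), and that coproducts behave well under $-\otimes-$ so that $F_A$ preserves them and the degreewise-coproduct really carries an $A$-module structure. Everything else is a transcription of the field case from \cite{semi-perfect}, which the paper has already asserted goes through verbatim, so I would keep the proof short and mostly cite that the arguments of \cite[Propositions~3.1, 4.2, 4.3]{semi-perfect} apply unchanged, spelling out only the observation that $R\text{-mod}$ is Grothendieck for general $R$ and that $U$ creates limits and colimits.
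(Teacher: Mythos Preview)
Your proposal is correct and aligns with the paper's approach: the paper gives no explicit proof at all, simply declaring the proposition a consequence of the fact that the proofs of Propositions~3.1,~4.2, and~4.3 of \cite{semi-perfect} extend unchanged to general $R$. Your sketch fleshes out exactly what those arguments amount to (degreewise creation of limits and colimits by $U$, exactness of filtered colimits in $R$-mod, and a generator via $F_A$), and you rightly conclude by proposing to cite those same propositions, so there is no substantive divergence.
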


We say that an object $V\in \Vect_\Cat$ is \emph{free} if every component
$V_{\gamma}$ of $V$ is a free $R$-module. 
It is clear that every free $\Cat$-graded $R$-module is projective, as the
lifting condition must be verified componentwise.

Given a $\Cat$-graded $R$-algebra $A$, we say that an
$A$-module $M$ is \emph{free} if there is a free $\Cat$-graded $R$-module
$V$ such that $F_A\left( V \right)\cong M$ in $A$-mod. 
Now we have an analog of \cite[Proposition~5.1]{semi-perfect}.

\begin{proposition}
	\label{freeprojective}
	Let $A$ be a $\Cat$-graded $R$-algebra and $M$ a free $A$-module. Then
	$M$ is projective.
\end{proposition}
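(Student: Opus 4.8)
The plan is to show that $M$ is projective by exhibiting it as a retract (indeed a direct summand) of a module that is manifestly projective, or more directly by verifying the lifting property using the adjunction $F_A\dashv U$ established above. Since $M$ free means $M\cong F_A(V)$ for some free $\Cat$-graded $R$-module $V$, it suffices to prove that $F_A(V)$ is projective in $A$-mod whenever $V$ is projective in $\Vect_\Cat$.

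The key steps, in order. First, I would recall that $F_A$ is a left adjoint to the forgetful functor $U$; hence $F_A$ preserves epimorphisms and, more to the point, there is a natural isomorphism $\operatorname{Hom}_{A}(F_A(V),N)\cong \operatorname{Hom}_{\Vect_\Cat}(V,U(N))$ for every $A$-module $N$. Second, I would observe that $U$ is exact: a sequence of $A$-modules is exact precisely when it is exact componentwise, and $U$ just forgets the action, so $U$ carries short exact sequences of $A$-modules to short exact sequences in $\Vect_\Cat$. Third, given an epimorphism $p\colon N\twoheadrightarrow N''$ in $A$-mod, applying $U$ gives an epimorphism $U(p)\colon U(N)\twoheadrightarrow U(N'')$ in $\Vect_\Cat$; since $V$ is projective in $\Vect_\Cat$ (every component $V_\gamma$ is free over $R$, and the lifting property in $\Vect_\Cat$ is checked componentwise, as already noted in the excerpt), any map $V\to U(N'')$ lifts through $U(p)$. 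Fourth, transporting this lift back across the adjunction isomorphism yields the required lift $F_A(V)\to N$ of a given map $F_A(V)\to N''$. This proves $F_A(V)$, and hence $M$, is projective.

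I would write this compactly: it is the standard fact that a left adjoint whose right adjoint is exact preserves projectives, specialised to $F_A\dashv U$, together with the already-established facts that $U$ is exact (exactness is componentwise in $A$-mod, by the extension of Propositions~3.1,~4.2,~4.3 of~\cite{semi-perfect}) and that free $\Cat$-graded $R$-modules are projective in $\Vect_\Cat$.

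The main obstacle is essentially bookkeeping rather than mathematics: one must be a little careful that ``epimorphism in $A$-mod'' really is detected componentwise so that $U$ preserves epimorphisms, and that the adjunction isomorphism is natural in the second variable so that the lift really does cover the prescribed map. Both points are immediate from the explicit description of the adjunction (the counit $\varepsilon$ is given by the structure maps, and $U$ just forgets the $A$-action), but they are what make the argument work, so I would state them explicitly rather than leave them to the reader.
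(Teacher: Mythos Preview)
Your proposal is correct. The paper does not give a proof of this proposition at all: it simply states it as ``an analog of \cite[Proposition~5.1]{semi-perfect}'' and moves on, relying on the earlier remark that the relevant proofs in~\cite{semi-perfect} extend verbatim to a general base ring $R$. Your argument---that $F_A$ is left adjoint to the forgetful functor $U$, that $U$ is exact because exactness in $A$-mod is detected componentwise, and that a left adjoint with exact right adjoint preserves projectives---is precisely the standard way to establish such a result and is almost certainly what the cited proof amounts to. The bookkeeping points you flag (that epimorphisms in $A$-mod are componentwise surjections, and naturality of the adjunction in the second variable) are exactly the right things to check, and both follow from the explicit description of $U$ and of the counit $\varepsilon$ already recorded in the paper.
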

For each arrow $\gamma$ in $\Cat$, we  define the $\Cat$-graded $R$-module $R\left[ \gamma \right]$ by
$$
R\left[ \gamma \right]_{\alpha} = 
\begin{cases}
	R & \mbox{if $\alpha=\gamma$}\\
	0 & \mbox{otherwise.}
\end{cases}
$$
Denote $F_A\left( R\left[ \gamma \right] \right)$ by $A\left[ \gamma \right]$. 
\begin{proposition}
	\label{genset}
	The set $\left\{\, A\left[ \gamma \right] \,\middle|\,
	\gamma\in\Cat\right\}$ is a generating set of $A$-mod, whose elements
	are  finitely generated
	projective $A$-modules. 
\end{proposition}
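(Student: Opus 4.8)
The plan is to establish the three assertions of Proposition~\ref{genset} in turn: that each $A[\gamma]$ is projective, that each $A[\gamma]$ is finitely generated, and that the family $\{A[\gamma]\}_{\gamma\in\Cat}$ generates $A$-mod.

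First, projectivity. Since $R[\gamma]$ has a single nonzero component equal to the free $R$-module $R$, it is a free $\Cat$-graded $R$-module in the sense defined above. Hence $A[\gamma]=F_A(R[\gamma])$ is a free $A$-module and is therefore projective by Proposition~\ref{freeprojective}. This step is immediate.

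Second, finite generation. Here I would unwind the definition of $F_A$: its $\delta$-component is
$$
A[\gamma]_{\delta} = \bigoplus_{\delta=\alpha\beta} A_{\alpha}\otimes R[\gamma]_{\beta},
$$
and since $R[\gamma]_{\beta}$ vanishes unless $\beta=\gamma$, this collapses to $A[\gamma]_{\delta}=\bigoplus_{\alpha\colon\alpha\gamma=\delta}A_{\alpha}\otimes R\cong A(\delta:\gamma)$. In particular, writing $\gamma\colon s\to t$, the element $e_t\otimes 1\in A_{1_t}\otimes R[\gamma]_{\gamma}\subset A[\gamma]_{\gamma}$ is a distinguished generator: for any $a\in A_{\alpha}$ with $\alpha\gamma=\delta$ one has $a\cdot(e_t\otimes 1)=ae_t\otimes 1=a\otimes 1$, which shows that every component of $A[\gamma]$ is generated over $A$ by the single element $e_t\otimes 1$. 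Thus $A[\gamma]$ is (cyclic, hence) finitely generated. I should be a little careful about what "finitely generated" means in this setting — presumably an $A$-module $M$ is finitely generated if it is a quotient of a finite direct sum of the standard projectives, or equivalently there are finitely many homogeneous elements generating $M$ under the $A$-action — and the computation above produces exactly one such generator.

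Third, the generating property. By the adjunction $F_A\dashv U$ recalled before Proposition~\ref{abelian}, for any $A$-module $M$ there is a natural isomorphism
$$
\operatorname{Hom}_{A}(A[\gamma],M)=\operatorname{Hom}_{A}(F_A(R[\gamma]),M)\cong \operatorname{Hom}_{\Vect_\Cat}(R[\gamma],M)\cong M_{\gamma},
$$
the last step because a morphism $R[\gamma]\to M$ in $\Vect_\Cat$ is exactly a choice of element (via the image of $1$) in $M_{\gamma}$. So maps out of $A[\gamma]$ into $M$ see precisely the $R$-module $M_{\gamma}$. To show $\{A[\gamma]\}$ generates, I would take a nonzero morphism $f\colon M\to N$ in $A$-mod; then $f_{\gamma}\colon M_{\gamma}\to N_{\gamma}$ is nonzero for some $\gamma$, so there is $m\in M_{\gamma}$ with $f_{\gamma}(m)\neq 0$, and the corresponding map $A[\gamma]\to M$ (sending $e_t\otimes 1\mapsto m$) composed with $f$ is nonzero. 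Equivalently, the canonical map $\bigoplus_{\gamma}\bigoplus_{M_{\gamma}}A[\gamma]\to M$ is an epimorphism because it is surjective in every component $\delta$: the counit $\varepsilon$ is componentwise surjective, as noted in the excerpt. Either formulation gives that $\{A[\gamma]\mid\gamma\in\Cat\}$ is a generating set.

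The only genuinely delicate point is pinning down the right notion of "finitely generated $A$-module" and checking that the cyclic description of $A[\gamma]$ matches it; the projectivity and generation parts are formal consequences of Proposition~\ref{freeprojective} and the adjunction, respectively, together with the componentwise surjectivity of the counit.
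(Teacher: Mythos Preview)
Your argument for projectivity matches the paper exactly, and your argument for generation (via the adjunction $\operatorname{Hom}_A(A[\gamma],M)\cong M_\gamma$ together with the surjectivity of the counit) is a slightly more streamlined version of what the paper does.

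The genuine divergence is in the finite-generation step, and your hesitation there is warranted. The paper does not argue that $A[\gamma]$ is cyclic. Instead it invokes Mitchell's criterion that in a cocomplete abelian category a projective object is finitely generated if and only if it is \emph{small}, meaning every morphism from it into a coproduct $\bigoplus_{i\in I}M_i$ factors through a finite sub-coproduct. Smallness is then checked directly via the adjunction: a map $f\colon A[\gamma]\to\bigoplus_{i\in I} M_i$ corresponds to a map $f'\colon R\to\bigoplus_{i\in I}(M_i)_\gamma$ in $\Vect$, and $f'(1)$ lies in finitely many summands, so $f'$ and hence $f$ factor through a finite sub-coproduct. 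This bypasses the question of what ``finitely generated'' should mean in $A$-mod, since smallness is a purely categorical condition.

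Your cyclic computation is correct and in fact completes to the paper's argument in one line: once you know that every $A$-homomorphism out of $A[\gamma]$ is determined by the image of $e_t\otimes 1$, smallness follows immediately, because that image sits in finitely many summands of any target coproduct. So the two approaches are really the same observation read in two directions; the paper's phrasing simply absorbs the definitional issue you flagged, while yours gives the explicit generator but stops just short of drawing the categorical conclusion.
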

\begin{proof}
	From Proposition~\ref{freeprojective} we know that the objects $A\left[
	\gamma \right]$, $\gamma\in\Cat$,  are projective. By the reasoning on p.105 of~\cite{mitchell} a
	projective object is finitely generated if and only if it is small. To
	check that $A\left[ \gamma \right]$ is small we have
	to show that for any family of $A$-modules $\left\{\, M_i \,\middle|\,
	i\in I \right\}$ and every map of $A$-modules $f\colon A\left[ \gamma \right]\to \bigoplus_{i\in I} M_i
	$ there is a finite subset $I'$ of $I$ such that $f$ factorizes
	via $\bigoplus_{i\in I'} M_i$. From the adjunction
	described above we have for any subset $I'$ of
	$I$ the  commutative diagram	$$
	\xymatrix{
	A\mbox{-mod}\left( A\left[ \gamma \right],
	\bigoplus\limits_{i\in I'} M_i \right) \ar[d] \ar@/1ex/[r]^{\cong} & 
	\Vect_\Cat\left( R\left[ \gamma \right], \bigoplus\limits_{i\in I'}
	M_{i} \right) \ar[d]\ar[r]^{\cong} & \Vect\left( R, \bigoplus\limits_{i\in I'} \left(
	M_i \right)_{\gamma}\right)\ar[d]\\
	A\mbox{-mod} \left( A\left[ \gamma \right],
	\bigoplus\limits_{i\in I} M_i \right)\ar@/1ex/[r]^{\cong} & \Vect_\Cat\left( R[\gamma] ,
	\bigoplus\limits_{i\in I} M_i \right)\ar[r]^{\cong} & \Vect\left( R,
	\bigoplus\limits_{i\in I}\left( M_i \right)_{\gamma}
	\right) ,
	} 
	$$ whose
	horizontal arrows are isomorphisms
and
	vertical arrows are induced by the natural inclusion of
	$\bigoplus_{i\in I'}M_i$ into $\bigoplus_{i\in I} M_i$. 
	Let $f'\colon R\to \bigoplus_{i\in I} \left( M_i \right)_{\gamma}$ be
	the map that corresponds to $f$. Then $f'\left( 1 \right) \in
	\bigoplus_{i\in I'} \left( M_i \right)_\gamma$ for a finite subset
	$I'\subset I$. Thus $f'$ can be factorized via $\bigoplus_{i\in I'}
	\left( M_i \right)_{\gamma}$ and therefore $f$ can be factorized via
	$\bigoplus_{i\in I'}M_i $. 

	It is left to show that $X := \left\{\, A\left[ \gamma \right] \,\middle|\, \gamma\in \Cat \right\}$ is a generating set for
	$A$-mod. Let $M$ be an $A$-module. 
	For every $\gamma\in\Cat$ there is a free $R$-module $V_{\gamma}$ and a
	surjective homomorphism of $R$-modules $\psi_{\gamma}\colon
	V_{\gamma}\to M_{\gamma}$. Then $V = \left( V_{\gamma}
	\right)_{\gamma\in \Cat}$ is a free $\Cat$-graded $R$-module and
	$\psi = \left( \psi_{\gamma} \right)_{\gamma\in \Cat}$ is a surjection of
	$\Cat$-graded $R$-modules. Now $F_A\left( V \right)$ is a direct sum of
	objects from $X$, since $F_A$ commutes with direct sums. Moreover, the
	composition
	$$
	\xymatrix{F_A\left( V \right)\ar[r]^{F_A\left( \psi \right)}\ar[r] &
	F_A\left( M \right) \ar[r]^{\varepsilon} & M }
	$$
	is a surjecive homomorphism of $A$-modules. Therefore, $M$ is a quotient
	of a direct sum of objects from $X$, which shows that $X$ generates $A$-mod. 
\end{proof}
The proof of 
the criterion of semi-perfectness that extends~\cite[Theorme~8.1]{semi-perfect}
to the case of $\Cat$-graded algebras over an arbitrary commutative ring is
similar to one given in~\cite{semi-perfect} and we skip it: 

\begin{theorem}
	\label{mainsemi}
	Let $\Cat$ be a small category and $A$ a $\Cat$-graded $R$-algebra. The
	category $A$-mod is semi-perfect if and only if for every arrow $\gamma\in
	\Cat$ the algebra $A\left( \gamma:\gamma \right)$ is semi-perfect. 
\end{theorem}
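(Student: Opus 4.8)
The plan is to reduce the statement to a computation of endomorphism rings together with a general, purely categorical, criterion for semi-perfectness. First I would record, for every $A$-module $M$ and every arrow $\gamma$, the natural isomorphism
$$
A\mbox{-mod}\left( A\left[ \gamma \right], M \right)\cong M_{\gamma}
$$
coming from the adjunction $F_A\dashv U$ and the identification $\Vect_\Cat\left( R\left[ \gamma \right], UM \right)\cong M_{\gamma}$ already used in the proof of Proposition~\ref{genset}; it sends $f$ to $f_{\gamma}$ evaluated on the canonical element $e_{t}\otimes 1$ of $A\left[ \gamma \right]_{\gamma}$, where $\gamma\colon s\to t$. Specialising to $M=A\left[ \gamma \right]$ and using, as in that proof, that $A\left[ \gamma \right]_{\gamma}=A\left( \gamma:\gamma \right)$, I would check that this isomorphism turns composition of endomorphisms into the multiplication of $A\left( \gamma:\gamma \right)$, so that $A\mbox{-mod}\left( A\left[ \gamma \right], A\left[ \gamma \right] \right)$ is isomorphic to $A\left( \gamma:\gamma \right)$ or to its opposite ring; since semi-perfectness is left--right symmetric, this shows that $A\left[ \gamma \right]$ is a semi-perfect object of $A$-mod precisely when $A\left( \gamma:\gamma \right)$ is a semi-perfect ring.

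By Propositions~\ref{abelian} and~\ref{genset}, $A$-mod is a Grothendieck category possessing the generating set $\left\{\, A\left[ \gamma \right] \,\middle|\, \gamma\in\Cat \right\}$ of finitely generated projective objects, so after the previous step the theorem becomes the instance for $A$-mod of the general fact that such a category is semi-perfect if and only if each member of the chosen generating set is a semi-perfect object. This is the abelian-categorical analogue of the classical characterisation of semi-perfect rings, and its proof is the one of Theorem~8.1 of~\cite{semi-perfect}, which uses nothing about the ground ring. For the implication that forces $A$-mod to be semi-perfect once all the $A\left( \gamma:\gamma \right)$ are, I would apply the construction on page~\pageref{construction} to split each $A\left[ \gamma \right]$ into a finite direct sum of completely indecomposable projective summands $P_{\gamma,j}$, obtaining a generating set of completely indecomposable finitely generated projectives; then, given a finitely generated object $M$, I would choose among all epimorphisms $Q\twoheadrightarrow M$ with $Q$ a finite direct sum of the $P_{\gamma,j}$ one with the fewest summands and show that its kernel is a small subobject of $Q$, so that it is a projective cover of $M$.

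For the converse it suffices to know that in a semi-perfect Grothendieck category with a generating set of finitely generated projectives every finitely generated projective object is semi-perfect --- the analogue for semi-perfect categories of Theorem~7.2 of~\cite{semi-perfect}; applying it to each $A\left[ \gamma \right]$ gives that every $A\left( \gamma:\gamma \right)$ is semi-perfect.

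The step I expect to be the main obstacle is the smallness of the kernel of the ``shortest'' epimorphism $Q\twoheadrightarrow M$ in the first implication, because smallness is a condition against all subobjects of $Q$ and not merely against its direct summands. To handle it one must describe the radical $J$ of $A$-mod on finite direct sums of the $P_{\gamma,j}$ --- here Proposition~\ref{radical} is the key tool --- and then run a Nakayama-type argument that really uses the lifting of idempotents and the semisimplicity of $A\mbox{-mod}\left( A\left[ \gamma \right], A\left[ \gamma \right] \right)$ modulo its radical, that is, the full strength of the hypothesis that $A\left( \gamma:\gamma \right)$ is semi-perfect, exactly as in the proof that finitely generated modules over a semi-perfect ring admit projective covers (\S27 of~\cite{anderson-fuller}). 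Passing from a field to an arbitrary commutative ring $R$ introduces nothing new, since the argument only manipulates the abelian category $A$-mod and the rings $A\left( \gamma:\gamma \right)$, both of which are available over a general $R$.
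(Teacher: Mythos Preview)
Your proposal is correct and matches the paper's own treatment: the paper does not give a proof at all but simply states that the argument is the one of \cite[Theorem~8.1]{semi-perfect} carried over verbatim to a general commutative base ring, which is exactly the reduction you describe (the endomorphism-ring computation $A\mbox{-mod}(A[\gamma],A[\gamma])\cong A(\gamma:\gamma)^{op}$ together with the categorical semi-perfectness criterion from that reference). Your identification of the one nontrivial step --- the smallness of the kernel in the construction of a projective cover of a finitely generated object --- and of the tools needed for it (Proposition~\ref{radical} and the Anderson--Fuller idempotent-lifting/Nakayama argument) is accurate and in line with how \cite{semi-perfect} proceeds.
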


We are now ready to prove the main theorem of the paper. 

\begin{proof}[Proof of Theorem~\ref{main0}]
	By	Propositions~\ref{abelian} and~\ref{genset} $A$-mod is a Grothendieck
	category and 
	$$
	\left\{\, A[\gamma] \,\middle|\, \gamma\mbox{ an arrow in }
	\Cat \right\}
	$$
	is a generating set of $A$-mod, whose elements are finitely generated
	projective $A$-modules.

	Suppose first that
	the rings $A\left( \gamma:\gamma \right)$ are left perfect for all
	arrows $\gamma\in \Cat$.
	Just like in the proof of  \cite[Theorem~8.1]{semi-perfect} there is an
	isomorphism of rings
	$A\mbox{-mod}(A\left[\gamma\right],A[\gamma])\cong
	A(\gamma:\gamma)^{op}$. Note that every left or right perfect ring is
	semi-perfect. Thus, $A\mbox{-mod}(A[\gamma],A[\gamma])$ is
	a semi-perfect ring. Hence $A[\gamma]$ is a semi-perfect object. 
	
	To prove that $A$-mod is perfect, by Proposition~\ref{new} it is enough to check that $\left\{\, A[\gamma]
	\,\middle|\, \gamma\mbox{ an arrow in }\Cat \right\}$ is a 
	$T$-nilpotent system.
	Let $f_k\colon A[\beta_k]\to A[\beta_{k+1}]$ be
	a sequence of $A$-homomorphisms such that $$f_k\in J\left( A[\beta_k],A[\beta_{k+1}] \right).$$ From the adjunction
	between $F_A$ and the forgetful functor  we have an isomorphism 
of $\Cat$-graded $R$-modules	
	\begin{align*}
		A\mbox{-mod}\left( A[\beta_k], A[\beta_{k+1}] \right)   & \cong
		\Vect_\Cat\left( R[\beta_{k}], A[\beta_{k+1}]\right) \\[2ex]& \cong \left( A[\beta_{k+1}]
		\right)_{\beta_k} \cong \bigoplus_{\alpha\beta_{k+1} = \beta_k}
		A_{\alpha} = A\left( \beta_k:\beta_{k+1} \right).
	\end{align*}
	There are two possibilities to consider:

	1) There are $k$ and $l$ such that
	$A\left( \beta_k:\beta_{k+l} \right)\cong 0$. Then $f_{k+l-1}\dots f_k=0$ and
	$$f_{k+l-1}\dots f_k\left( f_{k-1}\dots f_1\left( X
	\right) \right)=0$$ for any small subobject $X$ of $A[\beta_1]$. 

	2) We have $A\left( \beta_n:\beta_{n+m} \right)\not\cong0$ for all
	$n$, $m\in \N$. Then there is an arrow $\beta\in \Cat$ such that
	$\beta=\beta_n$ for infinitely many $n\in \N$. Let $n\left( k
	\right)$, $k\in \N$, be an increasing sequence of natural numbers such
	that $\beta_{n(k)} = \beta$ for all $k$. Define $g_k = f_{n(k+1)-1}
	\dots  f_{n(k)}$ and $g = f_{n(1)-1} \dots  f_1$. Then $g_k\in J\left( A\left[ \beta \right], A\left[ \beta \right] \right)$.
	Since $A\left( \beta:\beta \right)^{op}$ is  right perfect, the ideal
	$J\left( A\left( \beta:\beta \right)^{op} \right)$ is right
	$T$-nilpotent. Therefore there is $m\in\N$ such that $g_m\dots g_1=0$.
	Thus
	$$
	g_m\dots g_1g\left( X \right)= f_{n(m+1)-1} \dots  f_1\left(
	X \right) = 0
	$$
	for any small subobject $X$ of $A\left[ \beta_1 \right]$. 
	Thus $\left\{\, A\left[ \gamma \right] \,\middle|\, \gamma\in\Cat
	\right\}$ is a $T$-nilpotent system. 
	
	Suppose now that $A$-mod is a perfect category.  By Theorem~\ref{mainsemi} the rings $A\left( \beta:\beta
	\right)$ are semi-perfect. By definition of semi-perfect ring the
	quotient ring $\left.\raisebox{0.3ex}{$A\left( \beta:\beta \right)$}\middle/
	\raisebox{-0.3ex}{$J\left( A\left( \beta:\beta \right) \right)$}\right.$
	is semi-simple. Thus by Theorem~28.4(b) of~\cite{anderson-fuller} it is
	enough to show that the ideals $J\left( A\left( \beta:\beta \right)
	\right)$ are left $T$-nilpotent for every map $\beta\in \Cat$. We will
	show in fact that the ideals $J\left( A\left( \beta:\beta
	\right)^{op} \right)$ are right $T$-nilpotent. 
	
	Consider a sequence $f_k\in J\left( A\mbox{-mod}\left( A\left[ \beta \right],A\left[ \beta \right] \right)  \right)= J\left( A\left[ \beta \right],A\left[ \beta \right] \right)$, $k\in \N$. Then by Lemma~1 of
	\cite{harada} $\mathop{Im}f_1$ is a small subobject of $A\left[ \beta
	\right]$. By Proposition~\ref{harada3} the system $\left\{\, A\left[
	\beta \right]
	\,\middle|\, \beta\in\Cat \right\}$ is $T$-nilpotent. Therefore there is $m\in \N$ such that
	$$
	f_m\dots f_2\left( \mathop{Im}f_1 \right) = 0.
	$$
	Hence $f_m\dots f_2f_1 =0$. 
\end{proof}
\section{Examples}
\label{examples}
In this section we apply the main theorem to some classes of interesting
rings. 
\subsection{Algebras graded by a monoid}
Let $\Gamma$ be a monoid with unit $e$. We denote by
$(*, \Gamma)$ the category with one object $*$ and the set of morphisms
given by $\Gamma$. Recall that a  $\Gamma$-graded $R$-algebra  is an $R$-algebra
$A$ with a fixed direct sum decomposition into $R$-submodules $A_\gamma$, $\gamma\in
\Gamma$ such that $e_A\in A_e$ and  $A_{\alpha} A_{\beta} \subset
A_{\alpha\beta}$. Analogously, a $\Gamma$-graded module $M$ over a
$\Gamma$-graded $R$-algebra
$A$ is defined as an $A$-module with a direct sum decomposition $M=
\bigoplus_{\gamma\in \Gamma} M_\gamma$ of $R$-submodules such that $A_{\alpha} M_\beta \subset
M_{\alpha\beta}$.  Homomorphisms of $\Gamma$-graded $R$-algebras ($A$-modules) are 
homomorphisms of $R$-algebras ($A$-modules) that preserve the components of the direct sum
decomposition. 

It immediately follows that purely syntactical replacement of the sign
$\bigoplus_{\gamma\in \Gamma}$ by the brackets $(\ )_{\gamma\in \Gamma}$ gives
an
 equivalence between the category of $\Gamma$-graded algebras and
 the category $(*,\Gamma)$-graded algebras. By the same argument, if $A=\bigoplus_{\gamma \in
 \Gamma} A_{\gamma}$ is
 an $\Gamma$-graded $R$-algebra then the category $A$-gr of $\Gamma$-graded $A$-modules is
 equivalent to the category of $A'$-modules, where $A'$ is the $\left( *,\Gamma
 \right)$-graded algebra that corresponds to $A$. 

\subsubsection{Algebras graded by a group}
Now we assume that $\Gamma$ is a group and $A$ is a $\Gamma$-graded algebra.  Note that this is the most
widely studied case of graded algebras (the standard reference book on the
subject is~\cite{no}).

We denote by
$\Supp\left( A \right)$ the support of $A$, that is  the set of arrows $\gamma\in\Gamma$ such that
$A_{\gamma}\not \cong 0$.

\begin{proposition}
	\label{group}
	Let $\Gamma$ be a group and $A$ a $\Gamma$-graded $R$-algebra
	with finite support.
	 Then $A$-gr is 
	perfect if and only if $A_{e}$ is a left perfect ring. 
\end{proposition}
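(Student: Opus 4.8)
The strategy is to deduce Proposition~\ref{group} from Theorem~\ref{main0} by checking that a group $\Gamma$ with finite support forces the chain condition hypothesis of that theorem, and by identifying all the rings $A(\gamma:\gamma)$. First I would translate into the language of $(*,\Gamma)$-graded algebras: by the equivalence remarked above, $A$-gr is equivalent to $A'$-mod for the $(*,\Gamma)$-graded algebra $A'$ corresponding to $A$, and $\Supp(A')=\Supp(A)$. Since $\Gamma$ is a group, every pair of arrows is composable and $A'(\gamma:\delta)=\bigoplus_{\alpha\beta=\gamma,\ \beta=\delta}A'_{\alpha}=A'_{\gamma\delta^{-1}}$, so $A'(\gamma:\delta)\not\cong 0$ precisely when $\gamma\delta^{-1}\in\Supp(A)$. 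In particular $A'(\gamma:\gamma)=A'_{e}\cong A_{e}$ for every arrow $\gamma$, so the conclusion "$A(\gamma:\gamma)$ left perfect for all $\gamma$" in Theorem~\ref{main0} collapses to the single condition "$A_{e}$ is left perfect", which is exactly the right-hand side of Proposition~\ref{group}.

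It remains to verify the hypothesis of Theorem~\ref{main0}: every sequence $(\alpha_{k})_{k\in\N}$ of arrows with $A(\alpha_{k}:\alpha_{k+l})\not\cong 0$ for all $k,l\ge 1$ has some element repeated infinitely often. By the computation above, in the group case $A(\alpha_{k}:\alpha_{k+l})\not\cong 0$ means $\alpha_{k}\alpha_{k+l}^{-1}\in S$, where $S=\Supp(A)$ is finite. Taking $k=1$, we get $\alpha_{1}\alpha_{m}^{-1}\in S$ for every $m\ge 2$, hence $\alpha_{m}\in S^{-1}\alpha_{1}:=\{s^{-1}\alpha_{1}\mid s\in S\}$, a finite set; so the whole sequence (together with $\alpha_1$, which lies in $S^{-1}\alpha_1$ since $e\in S$) takes only finitely many values, and by pigeonhole at least one value is attained infinitely often. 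Thus the hypothesis of Theorem~\ref{main0} holds automatically, and applying that theorem yields the claimed equivalence.

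\textbf{Main obstacle.} The argument is essentially bookkeeping, so the only real care needed is twofold: first, confirming that the equivalence between $\Gamma$-graded and $(*,\Gamma)$-graded algebras genuinely carries the notion of perfectness across (this is immediate, since it is an equivalence of categories and perfectness is categorical) and that it matches $A$-gr with $A'$-mod rather than with the ungraded $A$-mod; second, being slightly careful that the index set is $\N$ and that "$k,l\ge 1$" really does let us fix $k=1$ and vary $l$, so that a single finite reservoir $S^{-1}\alpha_1$ controls the tail of the sequence. Neither of these is a serious difficulty, so I expect the proof to be short: roughly one paragraph reducing to $(*,\Gamma)$-graded algebras and computing $A'(\gamma:\delta)=A_{\gamma\delta^{-1}}$, one sentence noting $A'(\gamma:\gamma)\cong A_e$, one paragraph checking the chain condition via finiteness of $\Supp(A)$, and a concluding invocation of Theorem~\ref{main0}.
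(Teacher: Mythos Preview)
Your proposal is correct and follows essentially the same route as the paper: reduce to the $(*,\Gamma)$-graded algebra $A'$, compute $A'(\gamma:\delta)=A_{\gamma\delta^{-1}}$, fix the first index to trap the whole sequence in the finite set $\Supp(A)^{-1}\alpha_1$, and apply Theorem~\ref{main0}. You are in fact slightly more explicit than the paper in spelling out that $A'(\gamma:\gamma)\cong A_e$ for every $\gamma$, which is exactly what is needed to match the conclusion of Theorem~\ref{main0} with the statement of the proposition.
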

\begin{proof}Denote by $A'$ the  $\left( *,\Gamma \right)$-graded algebra that
	corresponds to $A$. 
	Then $A'$ and $\left( *,\Gamma \right)$ satisfy the conditions
	of Theorem~\ref{main0}. In fact, let $\left( \beta_k \right)_{k\in \N}$ be a
	sequence of elements in $\Gamma$ such that $A'\left(
	\beta_{k}:\beta_{k+l} \right)\not\cong 0$ for all $k$, $l\ge 1$. We have 
	for every $n\ge 2$
	$$
	0 \not\cong A'\left(\beta_1:\beta_n\right) = \bigoplus_{\alpha\beta_n=\beta_1}
	{A'}_{\alpha} = {A'}_{\beta_1\beta_n^{-1}} =
	A_{\beta_1\beta_n^{-1}}. 
	$$
	Thus $\beta_1\beta_n^{-1}$ lies in $\Supp(A)$. Since
	$\Supp\left( A \right)$ is finite at least one element repeats
	infinitely many times in the sequence $\left( \beta_1\beta_n^{-1}
	\right)_{n\in \N}$. As $\beta_n = \left( \beta_1\beta_n^{-1}
	\right)^{-1}\beta_1$ the same is true for $\left( \beta_n
	\right)_{n\in \N}$. 	
\end{proof}

This result was previously obtained in~\cite[Theorem~6(1,2)]{ndt} by a  different
technique. Note also that in~\cite{beattie1} it is proved that a $\Gamma$-graded
ring $A$ with finite support is left perfect as a usual ring
if and only if $A_e$ is left perfect. In fact, if $\Gamma$ is finite,  it was
also proved in~\cite{jensen} that a $\Gamma$-graded ring $A$ is left perfect as usual
ring both if and only if $A_e$ is left perfect, and  if and only if the category $A$-mod
is perfect. 
Chronologically the first results of this type are due to Renault~\cite{renault}
and Woods~\cite{woods} who gave a criterion for perfectness of group algebras
over a finite group. Their results were extended by Park in~\cite{park} to the
case of skew group rings. 

Let $A$ be a $\Gamma$-graded ring. The reader can find in~\cite{beattie2} a characterization of perfectness for the categories of
modules graded by $\Gamma$-sets. These categories do not fit in
the general framework of the present paper.

\subsubsection{Algebras graded by an ordered monoid}
 Recall that a poset $\left( S,\le \right)$ is called \emph{artinian} if every
 descending sequence $s_1 \ge s_2 \ge \dots $ of elements in $S$ stabilizes.

\begin{proposition}
	\label{gamma}
	Let $\Gamma$ be an artinian ordered monoid such that $e$ is the least
	element.  Then the category of left $\Gamma$-graded $A$-modules is
	perfect if and only if the ring $A_e$ is left perfect.
\end{proposition}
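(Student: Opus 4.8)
The plan is to deduce Proposition~\ref{gamma} from Theorem~\ref{main0} applied to the $\left( *,\Gamma \right)$-graded algebra $A'$ associated with $A$, exactly in the spirit of the proof of Proposition~\ref{group}. Since $e$ is the least element of $\Gamma$, the ring $A'\left( e:e \right) = A_e$, and for any $\gamma\in \Gamma$ we have $A'\left( \gamma:\gamma \right) = \bigoplus_{\alpha\gamma = \gamma} A_\alpha$; I would first observe that $\alpha\gamma = \gamma$ together with $e \le \alpha$ (and monotonicity of multiplication) forces $\gamma = \alpha\gamma \ge e\gamma = \gamma$, and more importantly an infinite descent argument: if $\alpha \ne e$ then iterating gives $\gamma = \alpha\gamma = \alpha^2\gamma = \dots$, so one must show this is incompatible with the artinian hypothesis unless $\alpha = e$. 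Here I would use that $e$ least and $\Gamma$ artinian forces $\alpha \ge e$ and the chain $\gamma \ge \alpha^{-1}\gamma$-type reasoning is unavailable (no inverses), so instead I would argue directly that the only $\gamma$ with $A'\left(\gamma:\gamma\right) \ne 0$ in a nontrivial way reduces the perfectness condition to $A_e$ being left perfect; in any case, if $A_e$ is left perfect then every $A'\left( \gamma:\gamma \right)$ contains $A_e$ as the $e$-component and one checks it is left perfect, while conversely $A'\left( e:e \right) = A_e$ must be left perfect.

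The substantive step is verifying the chain condition in the hypothesis of Theorem~\ref{main0}: every sequence $\left( \beta_k \right)_{k\in \N}$ of arrows in $\left( *,\Gamma \right)$ with $A'\left( \beta_k : \beta_{k+l} \right) \not\cong 0$ for all $k,l \ge 1$ has an element repeated infinitely often. Now $A'\left( \beta_k : \beta_{k+l} \right) = \bigoplus_{\alpha\beta_{k+l} = \beta_k} A_\alpha$, so the condition says: for each $k < m$ there exists $\alpha_{k,m} \in \Gamma$ with $\alpha_{k,m}\beta_m = \beta_k$ and $A_{\alpha_{k,m}} \not\cong 0$. In particular, taking $k = l$, $m = l+1$ gives $\beta_l = \alpha_{l,l+1}\beta_{l+1} \ge \beta_{l+1}$ (since $\alpha_{l,l+1} \ge e$, hence $\alpha_{l,l+1}\beta_{l+1} \ge e\beta_{l+1} = \beta_{l+1}$). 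Thus $\beta_1 \ge \beta_2 \ge \beta_3 \ge \dots$ is a descending chain in the artinian poset $\Gamma$, so it stabilizes: there is $N$ with $\beta_N = \beta_{N+1} = \dots$, which is precisely an element repeated infinitely many times. So the hypothesis of Theorem~\ref{main0} is satisfied unconditionally, for \emph{any} artinian ordered monoid with least element $e$.

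Having checked the chain condition, I would then apply Theorem~\ref{main0}: $A$-gr $\simeq A'$-mod is perfect iff $A'\left( \gamma : \gamma \right)$ is left perfect for all $\gamma \in \Gamma$. It remains to see this family of conditions is equivalent to the single condition that $A_e$ is left perfect. One direction is immediate: $A_e = A'\left( e:e \right)$. For the other direction I would show that if $\gamma$ satisfies $A'\left( \gamma : \gamma \right) \ne 0$ nontrivially, the idempotent $e_{*} \in A_e$ (the local unit at $*$) is the identity of $A'\left( \gamma:\gamma \right)$, and in fact $A'\left( \gamma : \gamma \right) = \bigoplus_{\alpha\gamma = \gamma} A_\alpha$; the point is that $\alpha\gamma = \gamma$ with $e$ least and $\Gamma$ artinian actually forces $\alpha = e$ — otherwise $\gamma, \alpha\gamma, \alpha^2\gamma, \dots$ would have to be handled, and since $\alpha \ge e$ gives $\alpha\gamma \ge \gamma$, combined with $\alpha\gamma = \gamma$ we only get equality, so I instead argue via cancellation-free means that the submonoid $\{\alpha : \alpha\gamma = \gamma\}$ is trivial because any nontrivial $\alpha$ in it satisfies $\alpha > e$, and then $\alpha^n\gamma = \gamma$ for all $n$ while the $\alpha^n$ need not descend — so the cleanest route is: $A'\left(\gamma:\gamma\right)$ always has $A_e$-component, and if $A_e$ is left perfect, a direct check (the algebra $A'\left(\gamma:\gamma\right)$ is a "triangular-like" extension of $A_e$, being graded by the artinian monoid $\{\alpha : \alpha\gamma=\gamma\}$ with least element $e$) shows it is left perfect. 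I expect this last bookkeeping — pinning down exactly why each $A'\left( \gamma:\gamma \right)$ is left perfect given only that $A_e$ is — to be the main obstacle; if $\{\alpha:\alpha\gamma=\gamma\} = \{e\}$ it is trivial, and otherwise one reduces by induction on this artinian submonoid, using that left perfect rings are closed under the relevant extensions.
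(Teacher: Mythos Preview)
Your verification of the chain hypothesis is correct and matches the paper (the paper invokes Theorem~\ref{main1} rather than Theorem~\ref{main0}, but this is the same argument): from $\beta_k = \alpha_{k,k+1}\beta_{k+1}$ and $\alpha_{k,k+1}\ge e$ one gets $\beta_1\ge\beta_2\ge\cdots$, which stabilises since $\Gamma$ is artinian.

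The gap is in your handling of $A'(\gamma:\gamma)$. You correctly see that the question is whether $\{\alpha:\alpha\gamma=\gamma\}=\{e\}$, but you then back away from it and propose instead to prove that each $A'(\gamma:\gamma)$ is left perfect via a ``triangular-like extension'' argument graded over the submonoid $\{\alpha:\alpha\gamma=\gamma\}$, by induction on its artinian structure. You do not carry this out, and it is not needed. The paper dispatches the point in one line: in an \emph{ordered} monoid the multiplication is compatible with the \emph{strict} order, so $\alpha\neq e$ together with $e$ least gives $\alpha>e$, hence $\alpha\gamma>e\gamma=\gamma$, contradicting $\alpha\gamma=\gamma$. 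Therefore $\{\alpha:\alpha\gamma=\gamma\}=\{e\}$ and $A'(\gamma:\gamma)=A_e$ for every $\gamma$, after which Theorem~\ref{main1} gives the equivalence immediately. You were circling this---you wrote down $\alpha>e$ and $\alpha\gamma\ge\gamma$---but used only non-strict monotonicity and so could not conclude; the intended meaning of ``ordered monoid'' here is precisely that strict inequalities are preserved, and that is all that is required (the artinian hypothesis plays no role in this step). Your fallback plan would need an independent argument that left-perfect rings are closed under the relevant graded extensions, which you have not supplied.
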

\begin{proof}
	Let $A'$ be the $\left( *,\Gamma \right)$-graded $R$-algebra that corresponds
	to $A$ under the equivalence described above. Let $\gamma\in \Gamma$. Then $\left\{\, \alpha \,\middle|\,
	\alpha\gamma=\gamma
	\right\} = \{e\}$. In fact, suppose
	$\alpha\gamma=\gamma$ and $\alpha\not = e$. Since $e$ is the least
	element of $\Gamma$ we have $\alpha>e$, and, as $\Gamma$ is an
	ordered monoid it follows that $\alpha\gamma>e\gamma=\gamma$,
	a contradiction. 
	Therefore for all $\gamma\in\Gamma$ $$(A')(\gamma:\gamma)= A'_e=
	A_e.$$  

	It is left to check that $\left( *,\Gamma \right)$ satisfies the
	condition of Theorem~\ref{main1}. 
	Suppose $\gamma_1$, $\gamma_2$, \dots is a sequence of elements in
	$\Gamma$ such that $\gamma_{k+1}$ is a right divisor of $\gamma_k$.
	Since $e$ is the least element of $\Gamma$ we get that $\gamma_k >
	\gamma_{k+1}$. Therefore $\gamma_1$, $\gamma_2$, \dots is a descending
	sequence and must stabilize as $\Gamma$ is artinian.
\end{proof} 
An example of a graded algebra in the conditions just described is the Kostant
form of the universal enveloping algebra of the complex Lie algebra of strictly upper
triangular matrices. In our work on Schur algebras~\cite{main}, we were led to
the construction of a minimal projective resolution of the trivial module of
this Kostant form. Although this module is obviously finitely generated it
can not to be said the same about the kernels of the projective covers which appear in the
resolution. It was this example that motivated the present paper. 

\begin{remark}
	In~\cite{eilenberg} Eilenberg gave a criterion for an $\N$-graded ring
	$A$
	to be perfect. Namely, Proposition~15 of~\cite{eilenberg} says that if $A_0$ is semiprimary then
	$A$ is graded perfect. Note that every semiprimary ring is perfect
	(p.318~\cite{anderson-fuller}) and therefore this result can be deduced
	from~Proposition~\ref{gamma} in this paper.
\end{remark}
Now we give an example which shows that the condition ``$\Gamma$ is artinian''
in Proposition~\ref{gamma} is essential. 

  Let $\Gamma = (\Z,+)$ and denote $\left( *,\Gamma
\right)$ by $\Cat$.
Given a field $\K$, define
a $\Cat$-graded $\K$-algebra $A$  by 
$$
A_k = \left\{ \begin{array}{cc}\K a_k & k\ge 0 \\ 0 & \mbox{otherwise}
\end{array} \right.,\  k\in \Z
$$
and multiplication $a_k a_l = a_{k+l}$. In fact, $A$ is just the polynomial
algebra in one variable considered as a $\Cat$-graded algebra. We define a
$\Cat$-graded
$A$-module $X$ by 
$$
X_k := \K x_k,\ k\in\Z
$$
and the action of $A$ on $X$ is given by $a_k x_l = x_{k+l}$. 

\begin{proposition}
	The module $X$ has no projective cover in $A$-mod.
\end{proposition}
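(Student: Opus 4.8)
The plan is to show directly that no epimorphism from a projective $A$-module onto $X$ can have small kernel. First I would identify the projective $A$-modules concretely: by Proposition~\ref{genset} every projective is a summand of a direct sum of copies of the $A[\gamma] = A[k]$, and since here $\Cat = (*,\Z)$ has a single object, $A[k]$ is just the polynomial algebra $A = \K[a_1]$ with its grading shifted by $k$, i.e. $(A[k])_l = \K a_{l-k}$ for $l \ge k$ and $0$ otherwise. The key structural observation is that $X$ itself is \emph{not} finitely generated as an $A$-module (it is ``$\K[a_1, a_1^{-1}]$'' as a module, unbounded in negative degrees), so any projective cover $P \twoheadrightarrow X$ would have to be an infinite direct sum; the obstruction will be to rule out smallness of the kernel in that infinite setting.

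The key step is to understand the radical. Since $A\left(0:0\right) = A_0 = \K$ is a field, each $A[k]$ has local endomorphism ring, and more importantly one computes $J(A[k], A[l]) = A\mbox{-mod}(A[k],A[l])$ whenever $k > l$ (a hom raising degree is not invertible) while $J(A[k],A[k]) = 0$ and there are no nonzero homs when $k < l$. So the radical consists exactly of the ``strictly degree-raising'' maps. Now suppose $\pi\colon P \twoheadrightarrow X$ is a projective cover. I would argue $P$ must have a summand $A[k]$ for arbitrarily small (very negative) $k$: otherwise the degrees appearing in $P$, hence in $X$, would be bounded below, contradicting $X_l \ne 0$ for all $l \in \Z$. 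Writing $P = A[k] \oplus P'$ with $k$ minimal among... — but there is \emph{no} minimal $k$, which is precisely the point. The plan is to exploit this: pick a strictly decreasing sequence $k_1 > k_2 > \cdots$ of degrees occurring in summands of $P$, and use the corresponding inclusions/degree-raising maps to build a subobject $S \subsetneq P$ with $S + \Ker\pi = P$, i.e.\ to show $\Ker\pi$ is not small.

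Concretely: let $S \subset P$ be the sub-$A$-module generated by the ``top'' generators of all summands $A[k_i]$ for $i \ge 1$ together with $\Ker\pi$; one checks $\pi(S) = X$ since these generators already surject onto a generating set... no — more carefully, I would instead take $S$ to be $\Ker\pi$ together with the image of all but one carefully chosen summand, and show $\pi|_S$ is still surjective because the omitted summand's contribution to $X$ is recovered from lower-degree summands via the $A$-action (this is where $X$ being ``divisible downward'' — every $x_l = a_1 x_{l-1}$ — is used). Then $S + (\text{omitted summand}) = P$ but $S \ne P$, contradicting smallness of $\Ker\pi \subset S$. The main obstacle I anticipate is the bookkeeping of the direct-sum decomposition of $P$ — a priori $P$ is only a \emph{summand} of a free module, so I may first need to replace $P$ by a free cover and transport the argument, or invoke that over this graded algebra projectives are free (graded Quillen–Suslin is trivial here since each component is a $\K$-vector space, but the $A$-module structure needs care). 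Once $P$ is pinned down as a direct sum of shifts $A[k]$, the smallness failure follows from the absence of a smallest shift, which is exactly the failure of the artinian hypothesis of Proposition~\ref{gamma} for $\Gamma = (\Z,+)$.
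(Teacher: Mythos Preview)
Your overall strategy---assume a projective cover $\phi\colon P\twoheadrightarrow X$ exists and exhibit a proper submodule $S\subsetneq P$ with $S+\Ker\phi=P$---is exactly the paper's. The genuine gap is the step you flag yourself: you need $P\cong\bigoplus_i A[k_i]$, and the justification you offer does not go through. Graded Nakayama (hence ``graded projectives are free'') for connected $\N$-graded $\K$-algebras requires the module to be bounded below; here $P$ surjects onto $X$, so $P_l\neq 0$ for all $l\in\Z$ and no such bound exists. Nor does a Krull--Schmidt argument save you automatically: knowing $P$ is a summand of $\bigoplus_{l\in\Z}A[l]$ with each $A[l]$ having local endomorphism ring does \emph{not}, without further hypotheses (precisely the $T$-nilpotence conditions that fail here), force the summand to be a direct sum of $A[k]$'s. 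So the decomposition you want is not available, and the rest of your argument (choose a summand, remove it, use divisibility of $X$) is built on it.

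The paper sidesteps this entirely. Instead of decomposing $P$, it uses the general fact that a projective cover is a retract of any projective surjecting onto $X$: so $P=e\bigl(F_A(X)\bigr)$ for some idempotent $e$ on $F_A(X)=\bigoplus_{l}A[l]$ with $fe=f$. The proof then analyses $e$ in the explicit basis $\{a_{k-l}\otimes x_l\}$ of $F_A(X)_k$: writing $e(a_0\otimes x_k)=\sum_{l\le k}\lambda_{k,l}\,a_{k-l}\otimes x_l$, idempotency forces $\lambda_{k,k}\in\{0,1\}$, and the set $I=\{k:\lambda_{k,k}=1\}$ is shown to have no minimum (this is the concrete incarnation of your ``no minimal shift''). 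Picking $l<k$ in $I$, the element $v=a_0\otimes x_k-a_{k-l}\otimes x_l$ satisfies $e(v)\neq 0$ and $e(v)\in\Ker\phi$; the paper then writes down an explicit generating set $B'$ for a submodule $Q$ with $Ae(v)+Q=P$ but $e(v)\notin Q$. All of this happens inside the \emph{free} module $F_A(X)$, where coordinates are available, rather than inside the unknown $P$. If you want to rescue your approach, the cleanest fix is to imitate this: drop the attempt to decompose $P$ and instead track the idempotent.
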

\begin{proof}
	Suppose $\phi\colon P\twoheadrightarrow X$ is a projective cover of $X$.
	Then, by Theorem~5.1\cite{semi-perfect}, $P$ is a direct summand of the free module $F_A(X)$ and there is an
	idempotent $e\colon F_A(X)\to F_A(X)$ such that $fe=f$, where $f\colon
	F_A(X)\to X$ 
	is given by $f(a_k\otimes x_l)= x_{k+l}$.

	Note that for every $k$, the set $\left\{\, a_{k-l}\otimes x_l
	\,\middle|\, k\ge l
	\right\}$ is a basis of the vector space $F_A(X)_k$, so we can write	
	$e(a_0\otimes x_k) = \sum_{l\le k} 
	\lambda_{k,l}a_{k-l}\otimes x_{l}$, where $\lambda_{k,l}\in \K$. Now the coefficient of $a_0\otimes x_k$ in
$\sum_{l<
k}\lambda_{k,l}a_{k-l}e\left( a_0\otimes x_{l}\right)$ is zero. Therefore the
coefficient of $a_0\otimes x_k$ in
$e^2\left(a_0\otimes x_k\right)$ is $\lambda_{k,k}^2$. Since $e$ is an idempotent we get that
$\lambda_{k,k}^2 = \lambda_{k,k}$.

For every $k$ there are two possibilities: either $\lambda_{k,k}=1$ or
$\lambda_{k,k}=0$. Let $I\subset \Z$ be the set of $k$'s such that
$\lambda_{k,k}=1$.

We will show that the set $I$ contains infinitely many elements. Suppose
$k$ is the minimal element of $I$. Then either $e(a_0\otimes x_{k-1})=0$ or
$ e\left( a_0\otimes x_{k-1} \right)  = \lambda_{k-1,l}
a_{k-1-l}\otimes x_l+ \sum_{m<l}\lambda_{k-1,m}a_{k-1-m}\otimes x_m$, where
$\lambda_{k-1,l}\not=0$ and $l<k$.  The first alternative is impossible as
$$fe\left( a_0\otimes x_{k-1}  \right)= f\left( a_0\otimes x_{k-1} \right)=
x_{k-1}\not=0.$$ In the second case all the monomials different from
$\lambda_{k-1,l}a_{k-1-l}\otimes x_{l}$ in $e\left( a_0\otimes
x_{k-1} \right)$ are of the 
form $\lambda_{k-1,m}a_{k-1-m}\otimes x_m$ for $m<l$. 
Since the coefficient of $a_{k-1-l}\otimes x_l$ in $e\left(
\lambda_{k-1,m}a_{k-1-m}\otimes x_m \right)$ is zero, and $e\left( a_0\otimes
x_{k-1}
\right) = e^2\left( a_0\otimes x_{k-1} \right)$, it follows that the coefficient of
$a_{k-1-l}\otimes x_l$ in $e\left( a_{k-1-l}\otimes x_l \right)$ is one, or in
other words, that $l\in I$. This gives a contradiction between assumptions that $k$ is the
minimal element of $I$ and $l\le k-1<k$. 

%

Let us fix $k,l\in I$, $l<k$. Denote $a_0\otimes x_k - a_{k-l}\otimes x_l$ by
$v$. We have $e(v) = a_0\otimes x_k + \dots$, where all
other summands are of the form $\mu a_{k-m}\otimes x_m$, $m<k$. This shows that
$e(v) \not=0$. Moreover, $fe(v)= f(v) = x_k - x_k=0$. Thus $e(v)\in \ker(f)\cap
P= \ker\left( \phi \right)$. Next we show that $\ker\left( \phi \right)$ is not a
small subobject of $P$. For this we will find an $A$-submodule $Q$ of $P$ such
that $Ae(v) + Q =P$ and $e\left( v \right)\not\in Q$.

Let
$$
B' := \left\{\, e(a_0\otimes x_m) \,\middle|\, m\not \in I,\ m<k \right\}\cup
\left\{\, e(a_0\otimes x_i)
\,\middle|\, i\in I, i\not=k \right\}
$$
and $B =B'\cup \left\{ e(v) \right\}$.
We will prove that $B$ generates $P$ as an $A$-module. For this we have only to show that for
every $n>k$, $n\not\in I$ the element $e\left( a_0\otimes x_n \right)$ of
$P$ is in the $A$-linear span of $B$. 
We have
\begin{equation}
	\begin{split}
	e\left( a_0\otimes x_n \right)& = e^2\left( a_0\otimes x_n \right)\\& = 
	\lambda_{n,s} a_{n-s} e\left( a_0\otimes x_s \right)  + \sum_{t<s}
	\lambda_{n,t} a_{n-t} e\left( a_0\otimes x_t \right),
\end{split}
		\label{formula}
\end{equation}
	where $\lambda_{n,s}\not=0$. 
If $s=k$ then we can rewrite the above sum in the form
$$
e\left( a_0\otimes x_n \right) =
\lambda_{n,k} a_{n-k} e\left( v\right) + \sum_{t<k}
\mu_{n,t} a_{n-t} e\left( a_0\otimes x_t \right),
$$
where $\mu_{n,l}= \lambda_{n,l} + \lambda_{n,k}$ and $\mu_{n,t} =
\lambda_{n,t}$ for $t\not=l$. Thus $e\left( a_0\otimes x_n \right)$ belongs to
the $A$-linear span of $B$. 
If $s\not=k$, then $e\left( a_0\otimes x_s \right)\in B$ as $s\in I$. 
Now, for each $t<s$ in (\ref{formula}), either $t$  belongs to $I$, or $e\left( a_0\otimes
x_t 
\right)$ can be written as an~$A$-linear combination of elements $e\left(
a_0\otimes x_r 
\right)$ with $r<t$. So we keep applying $e$ to each of these until we get
only
$e\left( a_0\otimes x_r \right)$ with either $r\in I$ or $r\le k$ (note that we
are left with a finite number of indices to deal, since we are only concerned
with those $k<t\le s$).  We conclude then that $e\left( a_0\otimes x_n \right)$
belongs to the $A$-linear span of $B$.

Let us denote by $Q$ the $A$-linear span of $B'$. We will show that the
element  $e\left(
v \right)$ of $P_k$  is not in $Q_k$. Every element $w$ of $Q_k$  can be written
in the form
$$
w= \sum_{t\le k} \tau_t a_{k-t} e\left( a_0\otimes x_t \right),
$$
where the sum is over $t\not \in I$, $t<k$ and $t\in I$, $t\not=k$. Thus in fact
$$
w= \sum_{t< k} \tau_t a_{k-t} e\left( a_0\otimes x_t \right).
$$
Now for $t<k$ the coefficient of $a_0\otimes x_k$ in every $a_{k-t}e\left( 
a_0\otimes x_t 
\right)$ is zero. On the other hand the coefficient of $a_0\otimes x_k$ in
$e\left( v \right)$ is $1$. Thus it is impossible that $e\left( v \right) =w$. 
\end{proof}
\subsection{Poset-graded algebras}
Let $(\Lambda,\le)$ be a poset. 
Denote by $\widetilde{\Lambda}$ the
category with the set of objects $\Lambda$ and exactly one morphism
$\overline{\mu\lambda}$
from $\lambda$ to $\mu$ for $\mu\ge \lambda$.  

Let $A$ be a $R$-algebra.  Suppose  there is an orthogonal decomposition of $e\in A$
$$
e=\sum_{\lambda\in \Lambda} e_\lambda, \ e_\lambda e_\mu=\delta_{\lambda\mu}
e_\lambda
$$ 
such that $$e_\mu Ae_\lambda\not\cong  0 \Rightarrow \mu\ge \lambda.$$ 
In this case we can define a $\widetilde{\Lambda}$-graded $R$-algebra
$\widetilde{A}$ by 
$$
\widetilde{A}_{\overline{\mu\lambda}} :=\left\{\, \left\llbracket a \right\rrbracket
\,\middle|\,a\in e_\mu A e_\lambda
\right\} $$
with the $R$-module structure inherited from $e_\mu Ae_\lambda$ via the
bijection
$\left\llbracket a \right\rrbracket\mapsto a$. We define multiplication on
$\widetilde{A}$ by $$
\left\llbracket x \right\rrbracket \left\llbracket y \right\rrbracket :=
\left\llbracket xy \right\rrbracket,\ x\in e_\nu Ae_\mu,\ y\in e_\mu A
e_\lambda.
$$
Then $\left\llbracket e_\lambda \right\rrbracket\in
\widetilde{A}_{\lambda\lambda}$ are local units in $\widetilde{A}$.

An example which illustrates this  situation  is provided by the Schur algebra
$S^+\left( n,r \right)$ for the upper Borel subgroup of the general linear group
$\GL_n$, where as a poset we take the set  of all compositions of $r$ into at most $n$ parts with the dominance
order (see~\cite{green} and \cite{aps}).

\begin{proposition}
	Let $\Lambda$, $A$ and $\widetilde{A}$ be as above. Suppose that for
	every $\lambda$, $\mu\in\Lambda$, $\mu\ge \lambda$ the interval
	$[\lambda,\mu]$ is an artinian poset.  Then the
	category $\widetilde{A}$-mod is perfect if and only if each ring
	$e_\lambda A e_\lambda$ is left perfect. 
\end{proposition}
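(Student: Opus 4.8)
The plan is to reduce this statement to Theorem~\ref{main1} applied to the category $\widetilde{\Lambda}$ and the $\widetilde{\Lambda}$-graded algebra $\widetilde{A}$. First I would identify the two ingredients that Theorem~\ref{main1} needs. For the algebra-side hypothesis: for an object $\lambda$ of $\widetilde{\Lambda}$, viewed as the identity arrow $\overline{\lambda\lambda}$, the ring $\widetilde{A}\left(\overline{\lambda\lambda}:\overline{\lambda\lambda}\right)$ is by definition $\bigoplus_{\alpha\colon \alpha\overline{\lambda\lambda}=\overline{\lambda\lambda}} \widetilde{A}_{\alpha}$; but in $\widetilde{\Lambda}$ the only arrow $\alpha$ with $\alpha\overline{\lambda\lambda}=\overline{\lambda\lambda}$ is $\overline{\lambda\lambda}$ itself, so this ring is just $\widetilde{A}_{\overline{\lambda\lambda}}\cong e_\lambda A e_\lambda$ as an $R$-algebra (one must check the multiplication agrees, which is immediate from the definition $\llbracket x\rrbracket\llbracket y\rrbracket=\llbracket xy\rrbracket$). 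Hence "$\widetilde{A}\left(\gamma:\gamma\right)$ left perfect for all $\gamma$" is exactly "$e_\lambda A e_\lambda$ left perfect for all $\lambda$". More generally one should note $\widetilde{A}\left(\overline{\mu\lambda}:\overline{\nu\lambda}\right)\cong \widetilde{A}_{\overline{\mu\nu}}$ when $\lambda\le\nu\le\mu$ and is $0$ otherwise, which is the category-graded analogue of the poset structure.

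The second, and genuinely substantive, ingredient is the chain condition of Theorem~\ref{main1}: every sequence $\beta_1\succ\beta_2\succ\dots$ of morphisms of $\widetilde{\Lambda}$, where $\succ$ means "right divisor", has an element repeated infinitely often. A morphism of $\widetilde{\Lambda}$ is some $\overline{\mu\lambda}$ with $\mu\ge\lambda$. I would first pin down what right-divisibility means here: $\overline{\mu'\lambda'}$ is a right divisor of $\overline{\mu\lambda}$ precisely when there is an arrow $\overline{\mu\mu'}$ (forcing $\mu\ge\mu'$) with $\overline{\mu\mu'}\,\overline{\mu'\lambda'}=\overline{\mu\lambda}$, which forces $\lambda'=\lambda$ and $\mu'\le\mu$. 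So a $\succ$-chain $\beta_1\succ\beta_2\succ\dots$ must have all $\beta_k$ sharing the same source $\lambda$, say $\beta_k=\overline{\mu_k\lambda}$, with $\mu_1\ge\mu_2\ge\mu_3\ge\dots$ in $\Lambda$. Thus the $\mu_k$ all lie in the interval $[\lambda,\mu_1]$, and this is exactly where the hypothesis that every such interval $[\lambda,\mu]$ is artinian enters: the descending chain $\mu_1\ge\mu_2\ge\dots$ stabilises, so some value $\mu$ occurs for all large $k$, and then $\beta_k=\overline{\mu\lambda}$ is repeated infinitely often.

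Having verified both hypotheses, the proof concludes by invoking Theorem~\ref{main1} with the category $\widetilde{\Lambda}$ and the $\widetilde{\Lambda}$-graded $R$-algebra $\widetilde{A}$: the category $\widetilde{A}$-mod is perfect if and only if every $\widetilde{A}\left(\gamma:\gamma\right)$ is left perfect, which by the first step is if and only if every $e_\lambda A e_\lambda$ is left perfect.

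I expect the main obstacle to be a bookkeeping one rather than a conceptual one: namely getting the right-divisibility relation in $\widetilde{\Lambda}$ exactly right (it is easy to mix up source and target, since composition of the unique arrows is completely rigid) and confirming that a $\succ$-chain is forced to be "isosource" with a descending chain of targets. Once that is set up, the artinian-interval hypothesis does precisely the work it is designed for. A secondary, purely routine check is that the $R$-algebra isomorphism $\widetilde{A}_{\overline{\lambda\lambda}}\cong e_\lambda Ae_\lambda$ is a ring isomorphism and not merely an $R$-module isomorphism — this follows directly from the definition of multiplication on $\widetilde{A}$ and the fact that $\llbracket e_\lambda\rrbracket$ is the unit.
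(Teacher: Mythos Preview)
Your approach is essentially identical to the paper's: both verify the chain condition of Theorem~\ref{main1} by observing that a $\succ$-chain in $\widetilde{\Lambda}$ must be of the form $\overline{\mu_k\lambda}$ with $\mu_1\ge\mu_2\ge\dots$ inside the artinian interval $[\lambda,\mu_1]$, and both identify $\widetilde{A}(\gamma:\gamma)$ with a corner ring $e_\mu A e_\mu$. One small bookkeeping point: Theorem~\ref{main1} requires $\widetilde{A}(\gamma:\gamma)$ to be left perfect for \emph{all} arrows $\gamma=\overline{\mu\lambda}$, not just identities, so you should lead with your ``more generally'' observation (specialised to $\nu=\mu$, giving $\widetilde{A}(\overline{\mu\lambda}:\overline{\mu\lambda})\cong e_\mu A e_\mu$) rather than the identity case $\gamma=\overline{\lambda\lambda}$---the paper does exactly this.
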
 
\begin{proof}
	 Note first that
	 for every $\mu\ge
	\lambda$
$$
\left\{\, \overline{\nu\mu} \,\middle|\, \overline{\nu\mu} \overline{\mu\lambda}=
\overline{\mu\lambda} \right\} = \left\{ \overline{\mu\mu} \right\}.
$$
Therefore $\widetilde{A}\left(\overline{\mu\lambda}:\overline{\mu\lambda}
\right) = \widetilde{A}_{\overline{\mu\mu}} \cong e_\mu A e_\mu $. 

It is left to check that the category $\widetilde{\Lambda}$ satisfies the
condition of Theorem~\ref{main1}. 	
	Suppose $\alpha_1$, $\alpha_2$, \dots is a sequence of maps in
	$\widetilde{\Lambda}$ such that $\alpha_{k+1}$ is a right divisor
	of $\alpha_k$. Then there are $\lambda$, $\mu_1$, $\mu_2$, \dots in
	$\Lambda$ such that $\alpha_k = \overline{\mu_k\lambda}$ and
	$\mu_{k+1}\le\mu_k$. Since $[\lambda, \mu_1]$ is artinian and every
	$\mu_k$ lies in this interval, we get that $\mu_1>\mu_2>\dots$
	stabilizes. Therefore $\alpha_1$, $\alpha_2$, \dots stabilizes as
	well. 
\end{proof} 

\section{Appendix}
	As we mentioned before, in this appendix we prove that the radical of an
	abelian category is unique and characterize it. 
\begin{proposition}
	\label{rad}
	If $\cat$ is an abelian category then there is a unique radical in
	$\cat$. 
\end{proposition}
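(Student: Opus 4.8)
The plan is to exhibit one radical explicitly and, at the same time, show that any radical must coincide with it. The main technical tool is the elementary fact that for $u\colon X\to Y$ and $v\colon Y\to X$ in an additive category, $1_X-vu$ is an isomorphism if and only if $1_Y-uv$ is (the inverse of $1_Y-uv$ being $1_Y+u(1_X-vu)^{-1}v$), together with the standard description of the Jacobson radical of a ring $S$ as $J(S)=\{x\in S\mid 1-yx\in S^{\times}\text{ for every }y\in S\}$ (and the fact that $x\in J(S)$ forces $1-x\in S^{\times}$).

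First I would establish a four-term equivalence: for a morphism $f\colon X\to Y$ the conditions (a) $gf\in J(\cat(X,X))$ for every $g\colon Y\to X$; (b) $1_X-gf$ is an automorphism for every $g\colon Y\to X$; (c) $1_Y-fg$ is an automorphism for every $g\colon Y\to X$; (d) $fg\in J(\cat(Y,Y))$ for every $g\colon Y\to X$; are all equivalent. Indeed (a)$\Rightarrow$(b) and (d)$\Rightarrow$(c) since a radical element of a ring gives a unit after subtraction from $1$; (b)$\Rightarrow$(a) because for fixed $g\colon Y\to X$ and arbitrary $h\in\cat(X,X)$ the map $1_X-h(gf)=1_X-(hg)f$ is an automorphism, whence $gf\in J(\cat(X,X))$ by the ring description of $J$, and (c)$\Rightarrow$(d) symmetrically; finally (b)$\Leftrightarrow$(c) is the $uv/vu$ observation applied with $u=f$ and $v=g$, ranged over all $g$.

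Next I would define $\operatorname{rad}(A,B)\subseteq\cat(A,B)$ to be the set of $f$ satisfying these equivalent conditions with $X=A$, $Y=B$. From form (a) it is immediate that $\operatorname{rad}(A,B)$ is a subgroup and that $\operatorname{rad}(A,A)=J(\cat(A,A))$ (the inclusion $\supseteq$ uses that $J(\cat(A,A))$ is an ideal, and $\subseteq$ follows by taking $g=1_A$). The ideal property of $\operatorname{rad}$ then drops out symmetrically from the two forms: if $f\in\operatorname{rad}(A,B)$ and $k\colon B\to C$, then for any $g\colon C\to A$ we get $g(kf)=(gk)f\in J(\cat(A,A))$ by (a), so $kf\in\operatorname{rad}(A,C)$; and if $f\in\operatorname{rad}(B,C)$ and $h\colon A\to B$, then for any $g\colon C\to A$ we get $(fh)g=f(hg)\in J(\cat(C,C))$ by (d), so $fh\in\operatorname{rad}(A,C)$. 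Hence $\operatorname{rad}$ is a radical, which settles existence.

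Finally, for uniqueness, let $I$ be an arbitrary radical. If $f\in I(A,B)$ then $gf\in I(A,A)=J(\cat(A,A))$ for every $g\colon B\to A$ (since $I$ is an ideal with the radical property), so $I(A,B)\subseteq\operatorname{rad}(A,B)$. Conversely, given $f\in\operatorname{rad}(A,B)$, set $C=A\oplus B$; since $\operatorname{rad}$ is an ideal, $i_Bf\pi_A\in\operatorname{rad}(C,C)=J(\cat(C,C))=I(C,C)$, and then $f=\pi_B(i_Bf\pi_A)i_A\in I(A,B)$ because $I$ is an ideal. Thus $I=\operatorname{rad}$, so the radical is unique. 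The only place where genuine work is needed is the four-term equivalence — concretely, the implication (b)$\Rightarrow$(a) (equivalently (c)$\Rightarrow$(d)) and the passage (b)$\Leftrightarrow$(c) through the $uv/vu$ identity; the rest is routine manipulation with biproduct identities and the definition of an ideal.
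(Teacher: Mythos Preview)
Your proof is correct and takes a genuinely different route from the paper's. The paper constructs the candidate radical by passing through the endomorphism ring of a direct sum: it \emph{defines} $J(A,B)$ as the set of $f$ for which $\left(\begin{smallmatrix}0&0\\ f&0\end{smallmatrix}\right)$ lies in $J\bigl(\cat(A\oplus B,A\oplus B)\bigr)$, and then verifies the ideal and radical properties using the identity $eJ(R)e=J(eRe)$ for idempotents (Curtis--Reiner, Proposition~5.13), together with explicit $3\times 3$ matrix manipulations in $\cat(A\oplus B\oplus C,A\oplus B\oplus C)$. Uniqueness is read off from Proposition~\ref{radical}. You instead give the Kelly-style intrinsic description via quasi-regularity: the four-condition equivalence (powered by the $1-uv$/$1-vu$ trick) lets you define $\operatorname{rad}(A,B)$ directly, and the ideal property follows by toggling between forms (a) and (d) without ever forming a direct sum. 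Your existence argument is thus more elementary and works already in any preadditive category; biproducts enter only in your uniqueness step, where the argument $f=\pi_B(i_Bf\pi_A)i_A$ is essentially the same biproduct trick the paper uses. What the paper's approach buys is that the formula $I(A,B)=\pi_B\,J\bigl(\cat(A\oplus B,A\oplus B)\bigr)\,i_A$ is visible from the outset, tying the radical directly to Proposition~\ref{radical}; what your approach buys is a cleaner, coordinate-free existence proof that avoids the matrix bookkeeping and the external reference.
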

\begin{proof}
	We use the notation introduced immediately after Definition~\ref{dar}. 
	Let $I$ be a
	radical of $\cat$. 
	By Proposition~\ref{radical} $I(A,B) = \pi_B I(A\oplus B, A\oplus B) i_A = \pi_B J\left(
\cat(A\oplus B, A\oplus B) \right)i_A$. This shows that a  radical is unique if
it exists.

Next we show the existence of a radical in $\cat$. 
We will identify the ring $\cat(A\oplus B, A\oplus B)$ with the matrix ring 
$$
\left(
\begin{array}{cc}
	\cat(A,A) & \cat(B,A) \\
	\cat(A,B) & \cat (B,B)
\end{array}
\right)
$$
via the structure maps of the direct sum.
Define $J(A,B)$ by 
\begin{align*}
 J\left( A,B \right) =\left\{\, f \,\middle|\, \left( 
\begin{array}{cc}
	0 & 0 \\
	f & 0 
\end{array}
\right)\in J\left( \begin{array}{cc} \cat(A,A) & \cat(B,A) \\ \cat(A,B) &
	\cat(B,B)\end{array} \right) \right\}.
\end{align*}
First we show that $J$ is an ideal in $\cat$. 
Let $C$ be an object in $\cat$. Denote by $E$ the idempotent 
$$
\left( 
\begin{array}{ccc}
	1_A & 0 & 0 \\
	0 & 1_B & 0 \\
	0 & 0 & 0 
\end{array}
\right)
$$
in $\cat(A\oplus B \oplus C, A\oplus B\oplus C)$. By
Proposition~5.13~\cite{curtis-reiner} we have an isomorphism $J\left(
\cat(A\oplus B, A\oplus B) 
\right) \cong E J\left( \cat(A\oplus B\oplus C, A\oplus B\oplus C) \right)E$.
Therefore
$$
J(A,B) = \left\{\, f\colon A\to B  \,\middle|\,  
\left( \begin{array}{ccc} 0 & 0 & 0 \\ f & 0 & 0 \\ 0 & 0 & 0 \end{array}
	\right) \in J\left( 
	\begin{array}{ccc}
\cat(A,A) & \cat(B,A) & \cat( C,A ) \\
\cat(A,B) & \cat(B,B) & \cat(C,B) \\
\cat(A,C) & \cat(B,C) & \cat(C,C)
	\end{array}
	\right)
\right\}.
$$
 Let $g\colon B\to C$. Then 
$$
\left( 
\begin{array}{ccc}
0 & 0 & 0 \\
0 & 0 & 0 \\
0 & g & 0 
\end{array}
\right)
\left( 
\begin{array}{ccc}
0 & 0 & 0\\
f & 0 & 0 \\
0 & 0 & 0 
\end{array}
\right) = \left( 
\begin{array}{ccc}
0 & 0 & 0 \\
0 & 0 & 0 \\
gf & 0 & 0 
\end{array}
\right)\in J\left( 
\begin{array}{ccc}
\cat(A,A) & \cat(B,A) & \cat(C,A ) \\
\cat(A,B) & \cat(B,B) & \cat(C,B) \\
\cat(A,C) & \cat(B,C) & \cat(C,C)
\end{array}
\right).
$$
Switching the roles of $B$ and $C$ in the above considerations we obtain
$$
J(A,C) = \left\{\, h\colon A\to C  \,\middle|\,  
\left( \begin{array}{ccc} 0 & 0 & 0 \\ 0 & 0 & 0 \\ h & 0 & 0 \end{array}
	\right) \in J\left( 
	\begin{array}{ccc}
\cat(A,A) & \cat(B,A) & \cat( C,A ) \\
\cat(A,B) & \cat(B,B) & \cat(C,B) \\
\cat(A,C) & \cat(B,C) & \cat(C,C)
	\end{array}
	\right)
\right\}
$$
and therefore $gf\in J(A,C)$. This shows that $J$ is a left ideal of $\cat$. That
$J$ is a right ideal can be shown analogously. 

Now we have to check that $J(A,A) = J\left( \cat(A,A) \right)$. By definition we
have
$$
J(A,A) = \left\{\, f\colon A\to A \,\middle|\,  \left( 
\begin{array}{cc}
	0 & 0 \\
	f & 0 
\end{array}
\right) \in J\left( 
\begin{array}{cc}
\cat(A,A) & \cat(A,A) \\
\cat(A,A) & \cat(A,A)
\end{array}
\right) \right\}.
$$
Let $f\in J(A,A)$. Then 
$$
\left(\begin{array}{cc}
	f & 0 \\
	0 & 0 
\end{array}\right)=
\left(\begin{array}{cc}
0 & 1_A \\
0 & 0 
\end{array}\right)
\left(\begin{array}{cc}
	0 & 0 \\
	f & 0 
\end{array}
\right) \in J\left( 
\begin{array}{cc}
\cat(A,A) & \cat(A,A) \\
\cat(A,A) & \cat(A,A)
\end{array}
\right) 
$$
since $J\left( \cat(A\oplus A, A\oplus A) \right)$ is an ideal of
$\cat(A\oplus A, A\oplus A)$. As 
$$
\left(\begin{array}{cc}
f & 0 \\
0 & 0 
\end{array}\right)= 
e\left(\begin{array}{cc}
f & 0 \\
0 & 0 
\end{array}\right)e,
$$
where $e = \left( \begin{array}{cc}1_A & 0 \\ 0 & 0 \end{array} \right)$, we
	obtain by Proposition~5.13~\cite{curtis-reiner}
	\begin{align*}
	\left( 
	\begin{array}{cc}
f & 0 \\
0 & 0
	\end{array}
	\right)\in e J\left( 
	\begin{array}{cc}
\cat(A,A) & \cat(A,A) \\
\cat(A,A) & \cat(A,A) 
	\end{array}
	\right)e & = J\left( e\left( 
	\begin{array}{cc}
\cat(A,A) & \cat(A,A) \\
\cat(A,A) & \cat(A,A) 
	\end{array}
	\right)e \right)\\[2ex] &   =
	J\left( 
	\begin{array}{cc}
	\cat(A,A) & 0 \\
	0 & 0 
\end{array}
	\right).
\end{align*}
Therefore $f\in J(\cat(A,A))$ and $J(A,A) \subset J\left( \cat(A,A) \right)$. 

Now suppose that $f\in J(\cat(A,A))$. Then 
$$
	\left( 
	\begin{array}{cc}
f & 0 \\
0 & 0
	\end{array}
	\right)\in J\left( 
	\begin{array}{cc}
\cat(A,A) & \cat(A,A) \\
\cat(A,A) & \cat(A,A) 
	\end{array}
	\right)
$$
 and 
 $$
 \left(\begin{array}{cc}
	0 & 0 \\
	f & 0 
\end{array}\right)=
\left(\begin{array}{cc}
0 & 0 \\
1_A & 0 
\end{array}\right)
\left(\begin{array}{cc}
	f & 0 \\
	0 & 0 
\end{array}
\right) \in J\left( 
\begin{array}{cc}
\cat(A,A) & \cat(A,A) \\
\cat(A,A) & \cat(A,A)
\end{array}
\right) 
 $$
  since $J\left( \cat(A\oplus A, A\oplus A) \right)$ is an ideal of
  $\cat(A\oplus A, A\oplus A)$. Thus $f\in J(A,A)$ and $J\left(
  \cat(A,A) 
  \right) \subset J(A,A)$. 

\end{proof}

\section{Acknowledgment}
The authors would like to thank the anonymous referee for the valuable suggestions
that considerably improved the paper.
\bibliography{perfectring_arx}

\providecommand{\bysame}{\leavevmode\hbox to3em{\hrulefill}\thinspace}
\providecommand{\MR}{\relax\ifhmode\unskip\space\fi MR }
\providecommand{\MRhref}[2]{%
  \href{http://www.ams.org/mathscinet-getitem?mr=#1}{#2}
}
\providecommand{\href}[2]{#2}
\begin{thebibliography}{10}

\bibitem{anderson-fuller}
Frank~W. Anderson and Kent~R. Fuller, \emph{Rings and categories of modules},
  Springer-Verlag, New York, 1974, Graduate Texts in Mathematics, Vol. 13.
  \MR{MR0417223 (54 \#5281)}

\bibitem{beattie2}
M.~Beattie and S.~D{\u{a}}sc{\u{a}}lescu, \emph{Categories of modules graded by
  {$G$}-sets. {A}pplications}, J. Pure Appl. Algebra \textbf{107} (1996),
  no.~2-3, 129--139, Contact Franco-Belge en Alg{\`e}bre (Diepenbeek, 1993).
  \MR{MR1383168 (97a:16080)}

\bibitem{beattie1}
Margaret Beattie and Eric Jespers, \emph{On perfect graded rings}, Comm.
  Algebra \textbf{19} (1991), no.~8, 2363--2371. \MR{MR1123129 (93b:16076)}

\bibitem{bourbaki}
Nicolas Bourbaki, \emph{Elements of mathematics. {T}heory of sets}, Translated
  from the French, Hermann, Publishers in Arts and Science, Paris, 1968.
  \MR{MR0237342 (38 \#5631)}

\bibitem{curtis-reiner}
Charles~W. Curtis and Irving Reiner, \emph{Methods of representation theory
  with applications to finite groups and orders {V}ol. {I}}, Wiley Classics
  Library, John Wiley \& Sons Inc., New York, 1990, Reprint of the 1981
  original, A Wiley-Interscience Publication. \MR{MR1038525 (90k:20001)}

\bibitem{eilenberg}
S.~Eilenberg, \emph{Homological dimension and syzygies}, Annals of Mathematics
  \textbf{64} (1956), no.~2, 328--336.

\bibitem{green}
J.~A. Green, \emph{On certain subalgebras of the {S}chur algebra}, J. Algebra
  \textbf{131} (1990), no.~1, 265--280.

\bibitem{harada}
Manabu Harada, \emph{Perfect categories. {I}}, Osaka J. Math. \textbf{10}
  (1973), 329--341. \MR{MR0367017 (51 \#3262a)}

\bibitem{jensen}
Anders Jensen and S{\o}ren J{\o}ndrup, \emph{Smash products, group actions and
  group graded rings}, Math. Scand. \textbf{68} (1991), no.~2, 161--170.
  \MR{MR1129585 (93a:16026)}

\bibitem{mitchell}
Barry Mitchell, \emph{Theory of categories}, Academic Press, New York and
  London, 1965.

\bibitem{ndt}
C.~N{\u{a}}st{\u{a}}sescu, L.~D{\u{a}}u{\c{s}}, and B.~Torrecillas,
  \emph{Graded semiartinian rings: graded perfect rings}, Comm. Algebra
  \textbf{31} (2003), no.~9, 4425--4531. \MR{MR1995550 (2004g:16047)}

\bibitem{no}
Constantin N{\u{a}}st{\u{a}}sescu and Freddy Van~Oystaeyen, \emph{Methods of
  graded rings}, Lecture Notes in Mathematics, vol. 1836, Springer-Verlag,
  Berlin, 2004. \MR{MR2046303 (2005d:16075)}

\bibitem{park}
Jae~Keol Park, \emph{Artinian skew group rings}, Proc. Amer. Math. Soc.
  \textbf{75} (1979), no.~1, 1--7. \MR{MR529201 (80d:16009)}

\bibitem{renault}
Guy Renault, \emph{Sur les anneaux de groupes}, C. R. Acad. Sci. Paris S\'er.
  A-B \textbf{273} (1971), A84--A87. \MR{MR0288189 (44 \#5387)}

\bibitem{sai}
Yousin Sai, \emph{On regular categories}, Osaka J. Math. \textbf{7} (1970),
  301--306. \MR{MR0276300 (43 \#2047)}

\bibitem{aps}
A.-P. Santana, \emph{The {S}chur algebra {$S(B\sp +)$} and projective
  resolutions of {W}eyl modules}, J. Algebra \textbf{161} (1993), no.~2,
  480--504. \MR{MR1247368 (95a:20046)}

\bibitem{main}
Ana~Paula Santana and Ivan Yudin, \emph{The {K}ostant form of
  {$\mathfrak{U}(sl_n^+)$} and the {B}orel sublagebra of the {S}chur algebra
  {$S(n,r)$}}, in preparation.

\bibitem{woods}
S.~M. Woods, \emph{On perfect group rings}, Proc. Amer. Math. Soc. \textbf{27},
  49--52.

\bibitem{semi-perfect}
Ivan Yudin, \emph{Semi-perfect category-graded algebras}, Comm. in Algebra
  (2010).

\end{thebibliography}
\bibliographystyle{amsplain}

\end{document}